\title{Weak and strong $k$-connectivity games}
\author{{Asaf Ferber \thanks{School of Mathematical Sciences, Raymond and Beverly Sackler Faculty of Exact Sciences, Tel Aviv University, Tel Aviv, 69978, Israel. Email: ferberas@post.tau.ac.il}}
\quad {Dan Hefetz \thanks{School of Mathematics, University of Birmingham, Edgbaston, Birmingham B15 2TT, United Kingdom. Email: d.hefetz@bham.ac.uk}}}
\newif\ifnotesw\noteswtrue
\newtheorem{theorem}{Theorem}[section]
\newtheorem{lemma}[theorem]{Lemma}
\newtheorem{proposition}[theorem]{Proposition}
\newtheorem{corollary}[theorem]{Corollary}
\newtheorem{remark}[theorem]{Remark}
\newenvironment{proof}{\noindent{\bf Proof\,}}{\hfill$\Box$}
\begin{document}
\maketitle

\begin{abstract}
For a positive integer $k$ we consider the $k$-vertex-connectivity
game, played on the edge set of $K_n$, the complete graph on $n$ vertices. 
We first study the Maker-Breaker version of this game and prove that, for any
integer $k \geq 2$ and sufficiently large $n$, Maker has a strategy for
winning this game within $\lfloor k n/2 \rfloor + 1$ moves, which is clearly 
best possible. This answers a question from~\cite{HKSS}. We then consider the
strong $k$-vertex-connectivity game. For every positive integer $k$ and
sufficiently large $n$, we describe an explicit first player's
winning strategy for this game. 
\end{abstract}

\section{Introduction}
\label{sec::intro}

Let $X$ be a finite set and let ${\mathcal F} \subseteq 2^X$ be a
family of subsets. In the \emph{strong game} $(X, {\mathcal F})$,
two players, called Red and Blue, take turns in claiming one
previously unclaimed element of $X$, with Red going first. The
winner of the game is the first player to fully claim some $F \in
{\mathcal F}$. If neither player is able to fully claim some $F \in
{\mathcal F}$ by the time every element of $X$ has been claimed by
some player, the game ends in a \emph{draw}. The set $X$ will be
referred to as the \emph{board} of the game and the elements of
${\mathcal F}$ will be referred to as the \emph{winning sets}.

It is well known from classic Game Theory that, for every strong game $(X, {\mathcal F})$,
either Red has a winning strategy (that is, he is able to win the game against any
strategy of Blue) or Blue has a drawing strategy (that is, he is able to avoid losing
the game against any strategy of Red; a \emph{strategy stealing} argument shows that
Blue cannot win the game). For certain games, a hypergraph coloring argument
can be used to prove that draw is impossible and thus these games are won by Red.
However, the aforementioned arguments are purely existential. That is, even if it is known
that Red has a winning strategy for some strong game $(X, {\mathcal F})$, it might
be very hard to describe such a strategy explicitly. The few examples of natural games
for which an explicit winning strategy is known include the \emph{perfect matching}
and \emph{Hamilton cycle} games (see~\cite{FH}). 

Partly due to the great difficulty of studying strong games, weak games were 
introduced. In the \emph{Maker-Breaker game} (also known as \emph{weak} game)
$(X,{\mathcal F})$, two players, called Maker and Breaker, take
turns in claiming previously unclaimed elements of $X$, with Breaker
going first (in some cases it will be convenient to assume that Maker
starts the game; whenever this assumption is made, it will be stated explicitly). 
Each player claims \textbf{exactly} one element of $X$ per turn (sometimes 
it will be convenient to assume that each player claims \textbf{at most} 
one element of $X$ per turn; since Maker-Breaker games are bias monotone,
this has no effect on the outcome of the game). The set
$X$ is called the \emph{board} of the game and the members of
${\mathcal F}$ are referred to as the \emph{winning sets}. Maker
wins the game as soon as he occupies all elements of some winning
set. If Maker does not fully occupy any winning set by the time
every board element is claimed by some player, then Breaker wins the
game. Note that being the first player is never a disadvantage in a
Maker-Breaker game. Hence, in order to prove that Maker can win some
Maker-Breaker game as the first or second player, it suffices to
prove that he can win this game as the second player.

In this paper we study the weak and strong versions of the 
$k$-vertex-connectivity game $(E(K_n), {\mathcal C}_n^k)$. 
The board of this game is the edge set of the complete graph on $n$
vertices and its family of winning sets ${\mathcal C}_n^k$, consists
of the edge sets of all $k$-vertex-connected subgraphs of $K_n$. 

It is easy to see (and also follows from~\cite{Lehman}) that, for
every $n \geq 4$, Maker can win the weak game $(E(K_n), {\mathcal
C}_n^1)$ within $n-1$ moves. Clearly this is best possible. It
follows from~\cite{HS} that, if $n$ is not too small, then Maker can
win the weak game $(E(K_n), {\mathcal C}_n^2)$ within $n+1$ moves
and this is best possible as well. It was proved in~\cite{HKSS}
that, for every fixed $k \geq 3$ and sufficiently large $n$, Maker
can win the weak game $(E(K_n), {\mathcal C}_n^k)$ within $kn/2 + (k
+ 4)(\sqrt{n} + 2n^{2/3} \log n)$ moves. Since, clearly Maker
cannot win this game in less than $k n/2$ moves, this shows that the
number of excess moves Maker plays is $o(n)$. It was asked
in~\cite{HKSS} whether the dependency in $n$ of the number of excess
moves can be omitted, that is, whether Maker can win $(E(K_n),
{\mathcal C}_n^k)$ within $kn/2 + c_k$ moves for some $c_k$ which is
independent of $n$. We answer this question in the affirmative.

\begin{theorem} \label{th::kConMakerBreaker}
let $k \geq 2$ be an integer and let $n$ be a sufficiently large
integer. Then Maker (as the first or second player) has a strategy 
for winning the weak game $(E(K_n), {\mathcal C}_n^k)$ within at most 
$\lfloor k n/2 \rfloor + 1$ moves.
\end{theorem}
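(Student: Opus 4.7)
The plan is to exploit the tightness of the bound. Every $k$-vertex-connected graph on $n$ vertices has minimum degree at least $k$ and hence at least $\lceil kn/2 \rceil$ edges, so Maker's budget of $\lfloor kn/2\rfloor+1$ moves permits at most one edge beyond the absolute minimum. Consequently, the strategy has to be almost perfectly efficient: essentially every edge Maker claims must serve to bring some vertex to degree~$k$.

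My proposed strategy has two phases. In Phase~1, Maker uses the fast $2$-vertex-connectivity strategy of~\cite{HS} to build a spanning $2$-connected graph $H$ on $V(K_n)$ within $n+1$ moves; in particular every vertex has Maker-degree at least~$2$ in $H$. In Phase~2 (relevant when $k\geq 3$), Maker plays $k-2$ further games, each designed to build a (near-)perfect matching on $V(K_n)$ edge-disjoint from $H$ and from the previously built matchings. Each matching contributes approximately $n/2$ moves and raises every vertex's degree by~$1$, so the total count, after a careful parity analysis, comes to exactly $\lfloor kn/2\rfloor +1$.

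The heart of the argument is to choose the matchings so that the union $G := H\cup M_1\cup\cdots\cup M_{k-2}$ is $k$-vertex-connected. I would prove a structural lemma of the following form: if $G$ has minimum degree $\geq k$ and, for every set $S\subseteq V(G)$ with $|S|<k$ and every component $A$ of $G-S$, the matchings $M_i$ contain enough edges crossing from $A$ to $V(G)\setminus(A\cup S)$, then $G$ is $k$-connected. Maker enforces this by playing each matching game reactively: he monitors all potential ``small-separator'' threats during play and defuses them using standard potential-function and pairing arguments, so that each move simultaneously raises a vertex's degree toward~$k$ and rules out a family of candidate cuts.

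The main obstacle will be achieving the \emph{exact} bound $\lfloor kn/2\rfloor +1$ rather than the weaker $\lfloor kn/2\rfloor + o(n)$ already established in~\cite{HKSS}. Two subtleties arise. First, the $k-2$ matching games must terminate with essentially zero wasted moves, which requires a clean pairing strategy and meticulous move accounting that keeps the ``threat list'' small enough to be handled inside the minimum-degree budget. Second, parity matters: when $kn$ is odd, Maker has literally no extra move and must construct an almost $k$-regular $k$-connected graph with exactly one vertex of degree $k+1$. Handling this case will likely require a slightly different final matching game in which Maker selects the ``high-degree'' vertex in advance and structures the last matching round around it.
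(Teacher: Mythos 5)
Your high-level accounting of the budget is right, and starting from a Hamilton cycle with a chord (Hefetz--Stich) is a sensible ingredient; the paper also uses it, but on $k-1$ separate vertex classes rather than on all of $V(K_n)$. However, there are two genuine gaps in the proposal. First, the move count does not close. A positive-minimum-degree (near-perfect-matching) game on a vertex set of even size $m$ cannot in general be won in $m/2$ moves: Breaker can always force one wasted move, so the true cost is $\lfloor m/2\rfloor+1$ (this is exactly Corollary~1.3 of~\cite{HKSS}, quoted here as Theorem~\ref{th::fastPosMinDegKn}, and the ``$+1$'' there is tight). Running $k-2$ independent matching games therefore costs roughly $(k-2)(n/2+1)$ moves, which overshoots the budget by about $k-2$. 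The paper sidesteps this precisely by \emph{not} playing repeated matching games; instead it first cheaply drives almost all vertices to degree $\geq k$ (with no waste, via cycles and the carefully set up matching game $G(V_1,U_1;V_2,U_2;d)$ of Lemma~\ref{lem::SpecialMatching}, which deliberately leaves slack sets $U_1,U_2$) and then plays a \emph{single} positive-minimum-degree game at the very end (Stage~IV), so that only one ``$+1$'' is ever incurred. You acknowledge this tension (``the $k-2$ matching games must terminate with essentially zero wasted moves''), but no mechanism is offered, and the straightforward execution of your plan fails by $k-2$ moves.

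Second, the structural lemma you gesture at is where the real work lies, and your target structure is harder to control than the paper's. You want a Hamilton cycle plus a chord plus $k-2$ edge-disjoint matchings on all of $V(K_n)$ to be $k$-connected. For this, removing any $k-1$ vertices must leave the graph connected; if the removed set includes the chord's two endpoints, the cycle splits into two arcs, and you then need matching edges crossing between the arcs for essentially every possible split --- a global condition over exponentially many cuts that Maker must enforce in-game with zero wasted moves, which you have not shown how to do. The paper instead builds a very different graph: it partitions $V$ into $k-1$ parts, puts a Hamilton cycle (with chord) in each part, and adds matchings \emph{between pairs of parts}. The resulting family ${\mathcal G}_k$ is defined by a handful of \emph{local, checkable} conditions (each pair of parts has a matching of size $3$; each vertex ``sees'' all but at most one other part; two deficient vertices are never cycle-adjacent), and Proposition~\ref{prop::kconnected} proves that these local conditions already force $k$-connectivity. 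That locality is exactly what makes the winning strategy implementable within the tight budget. Your ``monitor all small-separator threats and defuse them'' proposal is, as stated, not a strategy but a wish; without a concrete, local certificate of $k$-connectivity like the paper's, it is not clear your Phase~2 can be carried out at all, let alone in exactly $(k-2)n/2$ moves.
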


The upper bound on the number of moves obtained in
Theorem~\ref{th::kConMakerBreaker} is clearly best possible.

In the minimum-degree-$k$ game $(E(K_n), {\mathcal D}_n^k)$, the board
is again the edge set of $K_n$ and the family of winning sets
${\mathcal D}_n^k$, consists of the edge sets of all subgraphs of $K_n$
with minimum degree at least $k$. Since ${\mathcal C}_n^k \subseteq
{\mathcal D}_n^k$ for every $k$ and $n$ we immediately obtain the
following result.

\begin{corollary} \label{cor::MinDegk}
Let $k \geq 1$ be an integer and let $n$ be a sufficiently large
integer. Then Maker (as the first or second player) has a strategy 
to win the weak game $(E(K_n), {\mathcal D}_n^k)$ within at most 
$\lfloor k n/2 \rfloor + 1$ moves.
\end{corollary}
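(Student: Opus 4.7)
The approach is to deduce the corollary directly from Theorem~\ref{th::kConMakerBreaker} for every $k \geq 2$, leaving only the degenerate case $k=1$ to be handled by a short separate argument.

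The key observation is that any $k$-vertex-connected graph on more than $k$ vertices has minimum degree at least $k$, so ${\mathcal C}_n^k \subseteq {\mathcal D}_n^k$ whenever $n > k$. In a Maker-Breaker game, enlarging the family of winning sets can never hurt Maker: a strategy that lets Maker fully occupy some $F \in {\mathcal C}_n^k$ in at most $t$ moves lets him occupy the same $F$, which now also lies in ${\mathcal D}_n^k$, in at most $t$ moves. Plugging this into Theorem~\ref{th::kConMakerBreaker} gives the claimed bound $\lfloor k n/2\rfloor + 1$ for every $k \geq 2$ and all sufficiently large $n$, whether Maker is the first or the second player.

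For $k = 1$, Maker wins as soon as every vertex is incident to one of his edges, and we must achieve this in at most $\lfloor n/2\rfloor + 1$ moves. Since $\lceil n/2\rceil \leq \lfloor n/2\rfloor + 1$, it is enough for Maker to build a (near-)perfect matching in his own colour. A simple greedy strategy will do: on each of his turns Maker claims an arbitrary unclaimed edge $uv$ with both $u$ and $v$ currently uncovered. The only thing to verify is that such an edge always exists until the set $U$ of uncovered vertices shrinks to at most one vertex; but $|U|$ remains $\Omega(n)$ until the endgame, and the total number of Breaker edges at that point is at most $\lfloor n/2\rfloor + 1 \ll \binom{|U|}{2}$, so Breaker cannot have blocked every $U$-edge. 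When $|U|=1$ (only possible for $n$ odd), Maker uses his final move to cover the one remaining vertex by any free incident edge.

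The main obstacle is essentially nonexistent: the corollary is an almost immediate consequence of the containment ${\mathcal C}_n^k \subseteq {\mathcal D}_n^k$ together with Theorem~\ref{th::kConMakerBreaker}. The only mild subtlety is the separate handling of $k=1$, and even there the argument is a routine greedy/counting one that does not require any of the machinery used in the theorem.
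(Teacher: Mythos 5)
Your reduction for $k \geq 2$ is exactly the paper's: since every $k$-vertex-connected graph on $n > k$ vertices has minimum degree at least $k$, we have ${\mathcal C}_n^k \subseteq {\mathcal D}_n^k$, and enlarging the family of winning sets can only help Maker, so Theorem~\ref{th::kConMakerBreaker} immediately yields the bound. This part is fine.

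The $k=1$ case, however, has a genuine gap. The paper does not prove this case by a naive greedy matching; it invokes Corollary~1.3 of~\cite{HKSS} (quoted here as Theorem~\ref{th::fastPosMinDegKn}) and, for its own later needs, proves a strengthening (Theorem~\ref{th::fastPosMinDegDense}). The strategy there is \emph{not} ``claim an arbitrary free edge between two uncovered vertices''; rather, among the uncovered vertices Maker always matches a vertex whose Breaker-degree restricted to the uncovered set is maximal, precisely to keep Breaker from accumulating a dense subgraph on the shrinking set $V_0$ of uncovered vertices.

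Your counting step is where the argument breaks. You assert that $|U|$ ``remains $\Omega(n)$ until the endgame'' and that $\binom{|U|}{2}$ dwarfs the number of Breaker edges, but you never analyze the endgame itself, and in fact the regime $|U| = \Theta(\sqrt{n})$ is exactly where the count fails: once $|U| \lesssim \sqrt{n}$, we have $\binom{|U|}{2} \lesssim n/2$, which is comparable to the number of edges Breaker has claimed by then. Concretely, Breaker can fix a set $S$ of size about $\sqrt{n}$ and spend his roughly $(n - |S|)/2$ early moves claiming every edge of $K_n[S]$, which is feasible since $\binom{|S|}{2} \approx (n - \sqrt{n})/2$. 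If Maker's arbitrary choices happen to avoid $S$ (and a winning strategy must succeed for every way of resolving the word ``arbitrary''), then after Maker covers $V \setminus S$ in $(n-|S|)/2$ moves we have $U = S$ with all of $K_n[S]$ owned by Breaker, and Maker is forced to cover the $|S| \approx \sqrt{n}$ remaining vertices one edge per vertex, for a total of roughly $n/2 + \sqrt{n}/2$ moves, well over $\lfloor n/2 \rfloor + 1$. So the plain greedy strategy is not a winning strategy, and the $k=1$ case needs the more careful max-degree-guided matching of~\cite{HKSS} or of Theorem~\ref{th::fastPosMinDegDense}.
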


It is easy to see that Maker cannot win $(E(K_n), {\mathcal D}_n^k)$
within $\lfloor k n/2 \rfloor$ moves. Hence, the bound stated in
Corollary~\ref{cor::MinDegk} is tight.

Note that, for $k=1$, Corollary~\ref{cor::MinDegk} does not follow 
from Theorem~\ref{th::kConMakerBreaker}. However, this case was proved
in~\cite{HKSS}. Moreover, we will prove a strengthening of this result
in Section~\ref{sec::auxiliary}.

It was observed in~\cite{FH} that a fast winning strategy for Maker
in the weak game $(X, {\mathcal F})$ has the potential of being used
to devise a winning strategy for the first player in the strong game
$(X, {\mathcal F})$. Using our strategy for the weak game $(E(K_n), 
{\mathcal C}_n^k)$, we will devise an explicit winning strategy for 
the corresponding strong game. We restrict our attention to the case
$k \geq 3$ as the (much simpler) cases $k=1$ and $k=2$ were discussed
in~\cite{FH}.

\begin{theorem} \label{th::kConStrongGame}
let $k \geq 3$ be an integer and let $n$ be a sufficiently large
integer. Then Red has a strategy to win the strong game 
$(E(K_n), {\mathcal C}_n^k)$ within at most $\lfloor k n/2 \rfloor + 1$ moves.
\end{theorem}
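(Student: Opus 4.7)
The plan is to reduce to the weak game. Red plays as Maker in the Maker--Breaker $k$-connectivity game using the strategy $\Sigma$ guaranteed by Theorem~\ref{th::kConMakerBreaker}, regarding Blue as Breaker. His first move is $\Sigma$'s first Maker move; on each subsequent round he treats Blue's latest edge as Breaker's most recent response and plays the edge prescribed by $\Sigma$. If $\Sigma$ ever prescribes an already-claimed edge, Red plays an arbitrary unclaimed edge; by bias-monotonicity this can only help him. By Theorem~\ref{th::kConMakerBreaker}, Red completes a $k$-vertex-connected graph at some round $T \leq \lfloor kn/2 \rfloor + 1$.

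The task is then to verify that Blue has not completed such a graph first. When Red plays his $T$-th edge, Blue has made exactly $T-1 \leq \lfloor kn/2 \rfloor$ moves. Since every $k$-vertex-connected graph on $n$ vertices has at least $\lceil kn/2 \rceil$ edges, when $kn$ is odd we have $T-1 \leq \lfloor kn/2 \rfloor < \lceil kn/2 \rceil$ and Blue cannot yet have won; this settles the odd case. When $kn$ is even, however, $T-1$ could equal $\lceil kn/2 \rceil = kn/2$, and in principle Blue's graph might be a $k$-regular $k$-vertex-connected one.

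To rule this out, I plan to invoke the strengthening of Corollary~\ref{cor::MinDegk} promised in Section~\ref{sec::auxiliary} and to refine $\Sigma$ so that, while still winning Red's own $k$-connectivity game within $\lfloor kn/2 \rfloor + 1$ moves, it simultaneously acts as a Breaker strategy for the minimum-degree-$k$ game from Blue's perspective. That is, after every one of Blue's first $\lfloor kn/2 \rfloor$ moves some vertex still has Blue-degree strictly less than $k$. With this dual property, Blue's graph cannot be $k$-vertex-connected at round $T-1$, and Red wins.

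The main obstacle is the construction of this dual strategy: Red must act simultaneously as a fast Maker in his own game and as a Breaker against Blue's minimum-degree-$k$ progression. My plan is to interleave $\Sigma$ with a bounded number of ``threat-blocking'' moves directed at any vertex whose Blue-degree is approaching $k$, and to show by a careful charging argument that the cost of these diverted moves is absorbed by the $+1$ slack in Theorem~\ref{th::kConMakerBreaker}'s bound. The auxiliary material of Section~\ref{sec::auxiliary} should supply exactly the technical ingredients needed for this accounting.
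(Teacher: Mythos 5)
Your high-level picture is correct: the odd-$kn$ case is trivial, and when $kn$ is even the crux is that Blue could, in principle, complete a $k$-regular $k$-vertex-connected graph after exactly $kn/2$ moves, which forces you to argue about Blue's graph structure and not merely about move counts. That much matches the paper. However, your actual solution for the even case is not a proof but a plan, and the plan as stated would not work.

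The specific gap is in the last two paragraphs. You propose to ``refine $\Sigma$'' by interleaving threat-blocking moves against vertices approaching Blue-degree $k$, and to absorb the cost via the ``$+1$ slack'' of Theorem~\ref{th::kConMakerBreaker}. This does not add up. The slack is a single move, while Blue can raise many vertices simultaneously toward degree $k$; a genuine Breaker strategy against the minimum-degree-$k$ game requires an unbounded number of defensive moves, not one. Moreover, the $+1$ in Theorem~\ref{th::kConMakerBreaker} is already fully consumed by Maker's own construction (it is the extra edge needed for a Hamilton cycle with a chord, etc.), so there is no budget to redirect. You cannot both run $\Sigma$ unmodified and spend extra moves elsewhere without blowing the bound.

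The paper's actual mechanism is different and more delicate. It does not try to block Blue directly. Instead, Red follows the Maker strategy through its first three stages, and then, instead of playing the weak positive minimum degree game in the final stage, Red plays a \emph{strong} positive minimum degree game (Theorem~\ref{th::strongPosMinDegDense}) on the subgraph induced by vertices still of $R$-degree $k-1$. The key point, established in the proof of Theorem~\ref{th::strongPosMinDegDense}, is a parity argument tied to one tracked vertex $x$: if the game drags out to the full $\lfloor kn/2\rfloor + 1$ moves, it is because Blue blocked the one edge Red needed, and that blocking move forces $\Delta(B) > k$, which makes a $k$-regular Blue graph (hence any $k$-connected Blue graph on $kn/2$ edges) impossible. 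This ``Blue's delay is self-defeating'' accounting is the genuine content of the even case, and it requires the stage decomposition and the case analysis of Stage II ($\Delta(B) > k$ already, versus $d_B(v)\le k-1$ on $U_0$, versus exactly one vertex at $d_B = k$), none of which your sketch supplies. Your appeal to ``the strengthening promised in Section~\ref{sec::auxiliary}'' gestures in the right direction but is not carried out, and the charging argument you describe is not a substitute for it.
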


Our proof of Theorem~\ref{th::kConStrongGame} will in fact show that Red can 
build a $k$-vertex-connected graph before Blue can build a graph with minimum degree 
at least $k$. We thus have the following corollary. 

\begin{corollary} \label{cor::StrongMinDegk}
Let $k \geq 1$ be an integer and let $n$ be a sufficiently large
integer. Then Red has a strategy to win the strong game $(E(K_n),
{\mathcal D}_n^k)$ within at most $\lfloor k n/2 \rfloor + 1$ moves.
\end{corollary}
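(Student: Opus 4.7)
The plan is essentially one step, since this corollary is a direct consequence of Theorem~\ref{th::kConStrongGame} together with the strengthening announced in the remark just preceding it: Red's strategy there is designed to complete a $k$-vertex-connected subgraph in his colour \emph{strictly before} Blue completes any subgraph of minimum degree at least $k$. The only branching in the argument is by the value of $k$.

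For $k \in \{1,2\}$, I would appeal to the explicit first-player winning strategies constructed in~\cite{FH} for the strong perfect matching and strong Hamilton cycle games. A perfect matching has minimum degree $1$ and a Hamilton cycle has minimum degree $2$, so each such strategy already finishes a winning set of $(E(K_n), \mathcal{D}_n^k)$ before the opponent, and the move counts proved there are bounded by $\lfloor kn/2 \rfloor + 1$. For $k \geq 3$, I would let Red play according to the strategy given by the proof of Theorem~\ref{th::kConStrongGame}. By the strengthened form of that theorem, at the turn on which Red's graph first becomes $k$-vertex-connected, Blue's graph still contains a vertex of degree less than $k$. Since every $k$-vertex-connected graph has minimum degree at least $k$ (so $\mathcal{C}_n^k \subseteq \mathcal{D}_n^k$), Red has at that moment just completed a winning set of the strong minimum-degree-$k$ game while Blue has not yet completed any such winning set; hence Red wins, and the move bound $\lfloor kn/2 \rfloor + 1$ is inherited verbatim from Theorem~\ref{th::kConStrongGame}.

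Consequently, the corollary itself poses no real obstacle; all of the difficulty has been pushed into the proof of Theorem~\ref{th::kConStrongGame}, where the race between Red aiming for $k$-connectivity and Blue aiming for minimum degree $k$ must actually be handled. The key point one must verify there---and not here---is that Red can achieve the higher-quality target (being $k$-connected) faster than Blue can achieve even the lower-quality target (having minimum degree $k$).
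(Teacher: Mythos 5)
Your proposal matches the paper's own argument: for $k \geq 3$ you invoke exactly the strengthened form of Theorem~\ref{th::kConStrongGame} (that Red reaches $k$-connectivity before Blue reaches minimum degree $k$) together with the inclusion ${\mathcal C}_n^k \subseteq {\mathcal D}_n^k$, and for $k \in \{1,2\}$ you defer to~\cite{FH}, just as the paper does. The only caveat is that your sentence deducing the $k\in\{1,2\}$ cases from the strong perfect matching and Hamilton cycle strategies glosses over the fact that winning those games first does not by itself preclude Blue from having completed some other minimum-degree-$k$ subgraph earlier; this step genuinely requires the sharper statements established in~\cite{FH}, which is precisely why the paper cites that reference rather than rederiving it.
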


As with Corollary~\ref{cor::MinDegk}, the cases $k=1$ and $k=2$
do not follow from Theorem~\ref{th::kConStrongGame}. However, these
simple cases were discussed in~\cite{FH}. Moreover, for $k=1$ we will 
prove a strengthening of this result in Section~\ref{sec::auxiliary}. 

\noindent The rest of this paper is organized as follows: in
Subsection~\ref{subsec::notation} we introduce some notation and
terminology that will be used throughout this paper. In
Section~\ref{sec::families} we describe a family of
$k$-vertex-connected graphs that will be used in the proofs of
Theorems~\ref{th::kConMakerBreaker} and~\ref{th::kConStrongGame}. 
In Section~\ref{sec::auxiliary} we study certain simple games; the
results obtained will be used in the following sections.
In Section~\ref{sec::kConMB} we prove
Theorem~\ref{th::kConMakerBreaker} and in
Section~\ref{sec::strongCon} we prove Theorem~\ref{th::kConStrongGame}. 
Finally, in Section~\ref{sec::openprob} we present some open problems.

\subsection{Notation and terminology}
\label{subsec::notation}

\noindent Our graph-theoretic notation is
standard and follows that of~\cite{West}. In particular, we use the
following.

For a graph $G$, let $V(G)$ and $E(G)$ denote its sets of vertices
and edges respectively, and let $v(G) = |V(G)|$ and $e(G) = |E(G)|$.
For disjoint sets $A,B \subseteq V(G)$, let $E_G(A,B)$ denote the
set of edges of $G$ with one endpoint in $A$ and one endpoint in
$B$, and let $e_G(A,B) = |E_G(A,B)|$. For a set $S \subseteq V(G)$,
let $G[S]$ denote the subgraph of $G$ which is induced on the set
$S$. For disjoint sets $S,T \subseteq V(G)$, let $N_G(S,T) = \{u 
\in T : \exists v \in S, uv \in E(G)\}$ denote the set of
neighbors of the vertices of $S$ in $T$.
For a set $T \subseteq V(G)$ and a vertex $w \in V(G) \setminus T$ 
we abbreviate $N_G(\{w\}, T)$ to $N_G(w, T)$, and let $d_G(w,T)
= |N_G(w,T)|$ denote the degree of $w$ into $T$. For a set $S \subseteq V(G)$ 
and a vertex $w \in V(G)$ we abbreviate $N_G(S, V(G) \setminus S)$ to $N_G(S)$
and $N_G(w, V(G) \setminus \{w\})$ to $N_G(w)$. We let $d_G(w)
= |N_G(w)|$ denote the degree of $w$ in $G$. The minimum and maximum
degrees of a graph $G$ are denoted by $\delta(G)$ and $\Delta(G)$
respectively. For vertices $u,v \in V(G)$ let $dist_G(u,v)$ denote the 
\emph{distance} between $u$ and $v$ in $G$, that is, the number of edges 
in a shortest path of $G$, connecting $u$ and $v$. Often, when there is no 
risk of confusion, we omit the subscript $G$ from the notation above.
For a positive integer $k$, let $[k]$ denote the set $\{1, \ldots, k\}$.

Assume that some Maker-Breaker game, played on the edge set of some
graph $G$, is in progress. At any given moment during this game, we
denote the graph spanned by Maker's edges by $M$ and the graph
spanned by Breaker's edges by $B$. At any point during the game, the
edges of $G \setminus (M \cup B)$ are called \emph{free}.

Similarly, assume that some strong game, played on the edge set of some graph
$G$, is in progress. At any given moment during this game, we denote
the graph spanned by Red's edges by $R$ and the graph spanned by
Blue's edges by $B$. At any point during the game, the edges of $G
\setminus (R \cup B)$ are called \emph{free}.

\section{A family of $k$-vertex-connected graphs}
\label{sec::families}

In this section we describe a family of $k$-vertex-connected graphs. 
We will use this family in the proofs of Theorem~\ref{th::kConMakerBreaker} 
and Theorem~\ref{th::kConStrongGame}. 

Let $k \geq 3$ be an integer and let $n$ be a sufficiently large integer.  
Let ${\mathcal G}_k$ be the family of all graphs $G_k = (V,E_k)$ on $n$ vertices 
for which there exists a partition $V = V_1 \cup \ldots \cup V_{k-1}$ such that 
all of the following properties hold:
\begin{description} 
\item [(i)] $|V_i| \geq 5$ for every $1 \leq i \leq k-1$.
\item [(ii)] $\delta(G_k) \geq k$.
\item [(iii)] $G_k[V_i]$ admits a Hamilton cycle $C_i$ for every $1 \leq i \leq k-1$.
\item [(iv)] For every $1 \leq i < j \leq k-1$ the bipartite subgraph of $G_k$
with parts $V_i$ and $V_j$ admits a matching of size 3.
\item [(v)] For every $1 \leq i \leq k-1$ and every $u \in V_i$, $|\{j \in [k-1] \setminus \{i\} 
: d_{G_k}(u, V_j) = 0\}| \leq 1$.
\item [(vi)] For every $1 \leq i \leq k-1$ and every $u, v \in V_i$,
if $|\{j \in [k-1] \setminus \{i\} : d_{G_k}(u, V_j) = 0\}| = 
|\{j \in [k-1] \setminus \{i\} : d_{G_k}(v, V_j) = 0\}| = 1$, then 
$dist_{C_i}(u,v) \geq 2$.
\end{description}

\begin{proposition} \label{prop::kconnected}
For every integer $k \geq 3$ and sufficiently large integer $n$, 
every $G_k \in {\mathcal G}_k$ is $k$-vertex-connected.
\end{proposition}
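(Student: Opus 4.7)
The plan is to argue by contradiction. Suppose there exists $S \subseteq V$ with $|S| \leq k-1$ for which $G_k \setminus S$ is disconnected, and write $s_i = |S \cap V_i|$, so that $\sum_i s_i \leq k-1$. The first observation I would use is a cycle-transition argument: since $G_k[V_i] \supseteq C_i$, if $V_i$ meets two distinct components of $G_k \setminus S$ then traversing $C_i$ requires at least two passes through $S \cap V_i$, giving $s_i \geq 2$. Set $T = \{i : s_i \geq 2\}$; the budget then yields $|T| \leq (k-1)/2$, so in particular $T \neq [k-1]$.

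Next I would form the ``core'' subgraph $\Gamma = \bigcup_{j \notin T}(V_j \setminus S)$ and show $\Gamma$ is connected. For $j \notin T$, $V_j \setminus S$ is an arc of $C_j$ and hence connected; for any $j, j' \notin T$, $s_j + s_{j'} \leq 2$, so the size-$3$ matching of property (iv) between $V_j$ and $V_{j'}$ keeps at least one edge, joining the two arcs. If $\Gamma = V \setminus S$, we are done. Otherwise some component $A$ of $G_k \setminus S$ is disjoint from $\Gamma$, and then $A \subseteq \bigcup_{i \in T} V_i$, so $T \neq \emptyset$. Pick $u \in A \cap V_i$ for some $i \in T$. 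Since $A$ misses $\Gamma$, for every $j \notin T$ with $s_j = 0$ the vertex $u$ has no neighbor in $V_j$, making $V_j$ a ``bad'' part of $u$; property (v) then forces $|T_0| \leq 1$, where $T_0 = \{j \notin T : s_j = 0\}$.

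A short accounting using $|T_0| \leq 1$ and $\sum_l s_l \geq 2|T| + |T_1|$ with $|T_1| = k-1 - |T| - |T_0|$ pins down $|T| = 1$, $|T_0| = 1$, $s_i = 2$, and shows that every $v \in A \cap V_i$ has the unique element $V_{j_0}$ of $T_0$ as its one bad part and is therefore adjacent, for each of the remaining $k-3$ parts, to the unique $S$-vertex there. Property (vi) then applies to all pairs in $A \cap V_i$ and makes $A \cap V_i$ $C_i$-independent; in particular both $C_i$-neighbors of $u$ lie in $S \cap V_i = \{x, y\}$. Because $|V_i| \geq 5$ (property (i)), at most one vertex of $C_i$ can be a common $C_i$-neighbor of $x$ and $y$, forcing $A = \{u\}$. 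A final count of $u$'s neighbors---at most $2$ inside $V_i$ (namely $x$ and $y$, since any further $V_i$-neighbor would have to lie in $A \setminus \{u\} = \emptyset$) and exactly $k-3$ outside $V_i$---gives $d_{G_k}(u) \leq k-1$, contradicting $\delta(G_k) \geq k$ from property (ii).

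The main obstacle is the coordinated use of the global budget $\sum s_i \leq k-1$ together with the local conditions (i), (v), (vi): without (v) one cannot collapse to $|T| = 1$; without (vi) one cannot force $A \cap V_i$ to be independent on $C_i$; and without $|V_i| \geq 5$ from (i) one cannot then conclude $|A| \leq 1$. Once these three ingredients are in place, the minimum-degree condition (ii) closes the argument with essentially no further work.
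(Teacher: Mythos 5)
Your proof is correct, and it ultimately isolates exactly the same critical configuration as the paper, but it reaches it by a cleaner, more unified route. The paper proves $G_k\setminus S$ connected for an arbitrary $(k-1)$-set $S$ by an explicit case analysis on the vector $(|S\cap V_1|,\dots,|S\cap V_{k-1}|)$: all entries $1$; two entries $0$; or exactly one entry $0$ and one entry $2$ with the rest $1$. You instead assume a disconnected $G_k\setminus S$, show that the parts with $s_i\leq 1$ glue (via (iii) and (iv)) into one connected ``core'' $\Gamma$, and then use property (v) on a vertex $u$ of a component $A$ disjoint from $\Gamma$ together with the budget $\sum s_i\leq k-1$ to force, via a short inequality, $|T|=|T_0|=1$ and $s_i=2$ for the lone $i\in T$. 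This is precisely the paper's Case~3, derived rather than enumerated. At that point the two arguments diverge in flavour: the paper runs a constructive chase (using (ii) to find a neighbour $w\in V_j\setminus S$ of $u$, then (i) and (vi) to find a vertex near $u$ or $w$ that does see $V_i$), whereas you use (vi) and (i) to pin $A\cap V_i$ down to the single vertex $u$ (since $A\cap V_i$ must be $C_i$-independent and squeezed between the two $S$-vertices on a cycle of length $\geq 5$) and then contradict the minimum-degree bound (ii) by counting: at most $2$ neighbours of $u$ inside $V_i$, none in $V_{j_0}$, at most one in each of the remaining $k-3$ parts, giving $d(u)\leq k-1$. Both endgames use the same three properties (i), (ii), (vi) for the same purposes; yours replaces the paper's path-finding with a degree count, which some readers may find more transparent. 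The accounting step that collapses the case analysis is a genuine small improvement in exposition, and I did not find any gap.
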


\begin{proof}
Let $G_k$ be any graph in ${\mathcal G}_k$. Let $S \subseteq V$ be
an arbitrary set of size $k-1$. We will prove that $G_k
\setminus S$ is connected. We distinguish between the following 
three cases.

\begin{description} 
\item [Case 1:] $|S \cap V_i| = 1$ for every $1 \leq i \leq k-1$.\\
Since $G_k[V_i]$ is Hamiltonian for every $1 \leq i \leq k-1$ by
Property (iii) above, it follows that $(G_k \setminus S)[V_i]$ is 
connected for every $1 \leq i \leq k-1$. Hence, in order to prove that
$G_k \setminus S$ is connected, it suffices to prove that $E_{G_k \setminus S}(V_i, V_j) 
\neq \emptyset$ holds for every $1 \leq i < j \leq k-1$. Fix
some $1 \leq i < j \leq k-1$. It follows by Property (iv) above that 
there exist vertices $x_i, y_i, z_i \in V_i$ and $x_j, y_j, z_j \in V_j$
such that $x_i x_j, y_i y_j, z_i z_j \in E_{G_k}(V_i, V_j)$. Clearly, 
at least one of these edges is present in $G_k \setminus S$.     

\item [Case 2:] There exist $1 \leq i < j \leq k-1$ such that 
$S \cap V_i = \emptyset$ and $S \cap V_j = \emptyset$.\\
It follows by Properties (iii) and (iv) above that $(G_k \setminus S)[V_i \cup V_j]$ 
is connected. Moreover, it follows by Property (v) above that $V_i \cup V_j$ is a dominating
set of $G_k$. Hence, $G_k \setminus S$ is connected in this case.  

\item [Case 3:] There exist $1 \leq i \neq j \leq k-1$ such that 
$S \cap V_i = \emptyset$, $|S \cap V_j| = 2$ and $|S \cap V_t| = 1$ 
for every $t \in [k-1] \setminus \{i,j\}$.\\
It follows by Property (iii) above that $(G_k \setminus S)[V_i]$ is connected. 
Hence, in order to prove that $G_k \setminus S$ is connected, it suffices to 
prove that, for every vertex $u \in V \setminus (V_i \cup S)$ there is a path in 
$G_k \setminus S$ between $u$ and some vertex of $V_i$. Assume first that $u \in V_t$ 
for some $t \in [k-1] \setminus \{i,j\}$. As in Case 1, $(G_k \setminus S)[V_t]$ 
is connected and $E_{G_k \setminus S}(V_t, V_i) \neq \emptyset$. It follows 
that the required path exists. Assume then that $u \in V_j$. If $d_{G_k}(u, V_i) 
> 0$, then there is nothing to prove since $S \cap V_i = \emptyset$. Assume
then that $d_{G_k}(u, V_i) = 0$; it follows by Property (v) above that
$d_{G_k}(u, V_t) > 0$ holds for every $t \in [k-1] \setminus \{i,j\}$.
If $d_{G_k \setminus S}(u, V_t) > 0$ holds for some $t \in [k-1] \setminus \{i,j\}$,
then the required path exists as $(G_k \setminus S)[V_t]$ is connected and,
as previously shown, there is an edge of $G_k \setminus S$ between $V_t$
and $V_i$. Assume then that $d_{G_k \setminus S}(u, V_t) = 0$ holds for every 
$t \in [k-1] \setminus \{i,j\}$. It follows by Property (ii) above
that $d_{G_k}(u, V_j) \geq 3$ and thus $d_{G_k \setminus S}(u, V_j) \geq 1$.
Let $w \in V_j \setminus S$ be a vertex such that $uw \in E_k$. If 
$d_{G_k}(w, V_i) > 0$, then the required path exists. Otherwise,
since $|V_j| \geq 5$ by Property (i) above, it follows by Property 
(vi) above that there exists a vertex $z \in N_{G_k \setminus S}(u, V_j) 
\cup N_{G_k \setminus S}(w, V_j)$ such that $d_{G_k}(z, V_i) > 0$.  
Hence, the required path exists.   
\end{description}
We conclude that $G_k$ is $k$-vertex-connected.
\end{proof}

Note that while ${\mathcal G}_k$ includes very dense graphs, such as $K_n$,
for every $k \geq 3$ and every sufficiently large $n$, this family also 
includes graphs with $\lceil k n/2 \rceil$ edges; that is, $k$-vertex-connected 
graphs which are as sparse as possible. One illustrative example of such a 
graph consists of $k-1$ pairwise vertex disjoint cycles, each of length
$n/(k-1)$ where every pair of cycles is connected by a perfect matching
(in particular, $k-1 \mid n$). The graphs Maker and Red 
will build in the proofs of Theorems~\ref{th::kConMakerBreaker} 
and~\ref{th::kConStrongGame} respectively, are fairly similar to this example.

\section{Auxiliary games}
\label{sec::auxiliary}

In this section we consider several simple games. Some might be interesting
in their own right whereas others are artificial. The results we prove about
these games will be used in our proofs of Theorems~\ref{th::kConMakerBreaker} 
and~\ref{th::kConStrongGame}. We divide this section into several subsections,
each discussing one game.

\subsection{A large matching game}
\label{subsec::largeMatching}

Let  $G = (V_1 \cup V_2, E)$ be a bipartite graph, let $U_1
\subseteq V_1$ and $U_2 \subseteq V_2$, and let $d$ be a positive
integer. The board of the weak game $G(V_1, U_1; V_2, U_2; d)$ is
$E$. Maker wins this game if and only if he accomplishes all of the
following goals:
\begin{description}
\item [(i)] Maker's graph is a matching.
\item [(ii)] $d_M(u) = 1$ for every $u \in (V_1 \setminus U_1) \cup
(V_2 \setminus U_2)$.
\item [(iii)] $d_M(u) = 1$ for every $u \in V_1 \cup V_2$ for which
$d_B(u) \geq d$.
\item [(iv)] $|\{u \in U_1 : d_M(u) = 0\}| \geq |U_1|/2$ and $|\{u \in U_2 :
d_M(u) = 0\}| \geq |U_2|/2$.
\end{description}

\begin{lemma} \label{lem::SpecialMatching}
Let $m$ be a non-negative integer, let $d$ be a positive integer, 
let $8 d^{-1} \leq \varepsilon \leq 0.1$ be a real number and let $n_0 = 
n_0(m, d, \varepsilon)$ be a sufficiently large integer. 
Let $G = (V_1 \cup V_2, E)$ be a bipartite graph which satisfies all 
of the following properties:
\begin{description}
\item [(P1)] $n_0 \leq |V_1| \leq |V_2| \leq (1 + \varepsilon)|V_1|$.
\item [(P2)] $d_G(u, V_2) \geq |V_2| - m$ for every $u \in V_1$.
\item [(P3)] $d_G(u, V_1) \geq |V_1| - m$ for every $u \in V_2$.
\end{description}
Let $U_1 \subseteq V_1$ and $U_2 \subseteq V_2$ be such that
$\varepsilon |V_1| \leq |U_1| \leq 2 \varepsilon |V_1|$ and
$\varepsilon |V_2| \leq |U_2| \leq 2 \varepsilon |V_2|$. Then Maker
(as the first or second player) has a winning strategy for the game
$G(V_1, U_1; V_2, U_2; d)$.
\end{lemma}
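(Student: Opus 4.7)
The plan is to give Maker a greedy strategy that maintains his edges as a matching $M$ and a dynamic pending set $T$ (a vertex leaves $T$ the moment Maker matches it). Initially $T:=(V_1\setminus U_1)\cup(V_2\setminus U_2)$; whenever a Breaker move brings a still-unmatched vertex $u$ to $d_B(u)\geq d$, Maker inserts $u$ into $T$. On each of his turns Maker, provided $T\neq\emptyset$, picks $u\in T$ of maximum current $d_B(u)$ (ties broken arbitrarily), and plays an edge $uv$ with $v\in V_{3-i}$ unmatched and $uv\in E\setminus B$; he prefers $v\in T$ (covering two pending vertices with a single edge) and otherwise takes $v\in U_{3-i}\setminus T$. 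Once $T$ empties and property (iv) already holds, Maker passes for the remainder of the game.

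First I would verify that a legal partner $v$ always exists. By (P2)/(P3) at most $m$ vertices of $V_{3-i}$ are non-neighbours of $u$ in $G$; at most $d_B(u)$ of its $G$-neighbours are blocked by Breaker; and at most $|M|\leq |V_1|$ vertices of $V_{3-i}$ are already used by Maker. The priority-by-$d_B$ rule plus an amortised argument (each unit of $d_B$ at a pending vertex is charged to the corresponding Breaker move on it) keeps $d_B(u)=O(d+\varepsilon|V_1|)$ at the moment $u$ is processed, so the number of admissible partners is at least $|V_{3-i}|-m-O(d+\varepsilon|V_1|)-|V_1|=\Omega(\varepsilon|V_1|)$, which is positive for $n_0$ sufficiently large in terms of $m,d,\varepsilon$. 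Hence Maker never gets stuck, and once $T$ is exhausted properties (i), (ii) and (iii) all hold by construction.

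The heart of the proof is property (iv). Split $|M\cap U_i|$ into \emph{forced} edges, in which the $V_i$-endpoint is itself a pending vertex of $U_i$, and \emph{partner} edges, in which the $U_i$-endpoint serves only as partner for a pending vertex in $V_{3-i}$. Each forced edge required Breaker to spend at least $d$ moves at the corresponding $U_i$-vertex; Maker finishes his to-do list in at most $|V_1|+O(1)$ moves, so Breaker also plays at most $|V_1|+O(1)$ moves and the number of $U_i$-vertices ever becoming urgent is at most $2(|V_1|+O(1))/d\leq \varepsilon|V_1|/4\leq |U_i|/4$, using $\varepsilon\geq 8/d$ and $|U_i|\geq \varepsilon|V_i|\geq \varepsilon|V_1|$. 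Partner edges into $U_i$ appear only after every pending vertex of $V_{3-i}$ has been matched; (P1) together with the size bounds on $U_1,U_2$ then limits the partner contribution to $O(\varepsilon|V_1|)\leq |U_i|/4$ as well. Summing the two bounds gives $|M\cap U_i|\leq |U_i|/2$, proving (iv). The main obstacle is the joint amortised control of the forced and partner contributions without double-counting; the hypothesis $\varepsilon\geq 8/d$ is precisely the slack needed to absorb both into $|U_i|/2$.
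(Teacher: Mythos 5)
Your overall approach mirrors the paper's: Maker greedily builds a matching, prioritizing high-$d_B$ pending vertices and, otherwise, preferring to cover two pending vertices with a single edge. The paper organizes the same idea into three explicit priority rules (dangerous vertices, then matches within $(V_1\setminus U_1)\times (V_2\setminus U_2)$, then arbitrary pending vertices), but the substance is the same, and your bound on the \emph{forced} contribution to $M\cap U_i$ (at most $2(|V_1|+O(1))/d\le |U_i|/4$ vertices of $U_i$ ever become urgent) coincides with the paper's count and is fine.

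The gap is your bound on \emph{partner} edges into $U_i$, which you assert to be $O(\varepsilon|V_1|)\le |U_i|/4$ without computing the implicit constant. That step in fact cannot be made to work, because the deficit $|V_2\setminus U_2|-|V_1\setminus U_1|$ may already exceed $|U_1|/2$. Concretely, take $|V_2|=(1+\varepsilon)|V_1|$, $|U_1|=2\varepsilon|V_1|$ and $|U_2|=\varepsilon|V_2|$; all of (P1) and the prescribed bounds on $|U_1|,|U_2|$ hold. Then $|V_2\setminus U_2|-|V_1\setminus U_1|=(2\varepsilon-\varepsilon^2)|V_1|$. Any matching covering $(V_1\setminus U_1)\cup(V_2\setminus U_2)$ has $|V_2\setminus U_2|$ edges meeting $V_2\setminus U_2$, and at most $|V_1\setminus U_1|$ of their $V_1$-endpoints avoid $U_1$, so it uses at least $(2\varepsilon-\varepsilon^2)|V_1|>\varepsilon|V_1|=|U_1|/2$ vertices of $U_1$. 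Thus goals (ii) and (iv) are jointly infeasible here, regardless of the strategy. The paper's own argument for this step — that whenever Maker plays a ``part (3)'' move both $V_1\setminus U_1$ and $V_2\setminus U_2$ have at most $m+d$ unmatched vertices — is derived from (P2)/(P3) only under the tacit assumption that both sides still contain an unmatched non-$U$ vertex; once the smaller side is exhausted the deduction fails and the number of such moves is governed by the size imbalance, not by $m+d$. Closing the gap requires an extra hypothesis controlling $\bigl||V_1\setminus U_1|-|V_2\setminus U_2|\bigr|$ (implicitly available in the paper's Stage~III application, where $B_{ij}$ and $B_{ji}$ can be chosen nearly equal), and any proof must invoke it explicitly; your amortised charging argument does not supply this control.
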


\begin{proof}
First we describe a strategy for Maker and then prove it is a
winning strategy. At any point during the game, if Maker is unable
to follow the proposed strategy, then he forfeits the game.

Throughout the game, Maker maintains a matching $M_G$ and a set
$D \subseteq V_1 \cup V_2$ of \emph{dangerous} vertices. A vertex
$v \in V_1 \cup V_2$ is called dangerous if $d_M(v) = 0$ and $d_B(v) 
\geq d$. Initially, $M_G = D = \emptyset$.

For every positive integer $j$, Maker plays his $j$th move as
follows.

\begin{enumerate}[(1)]
\item If $D \neq \emptyset$, then Maker claims an arbitrary free edge $uv \in E$ for
which $u \in D$ and $d_M(v) = 0$. Subsequently, he updates $M_G := M_G \cup \{uv\}$
and $D := D \setminus \{u, v\}$.

\item Otherwise, if there exists a free edge $uv \in E$ such that
$u \in V_1 \setminus U_1$, $v \in V_2 \setminus U_2$ and
$d_M(u) = d_M(v) = 0$, then Maker claims it. Subsequently, he updates
$M_G := M_G \cup \{uv\}$.

\item Otherwise, if there exists a vertex $u \in (V_1 \setminus
U_1) \cup (V_2 \setminus U_2)$ such that $d_M(u) = 0$, then Maker 
claims a free edge $uv \in E$ such that $d_M(v) = 0$. Subsequently, 
he updates $M_G := M_G \cup \{uv\}$.
\end{enumerate}

The game is over as soon as $M_G$ covers $(V_1 \setminus U_1) \cup
(V_2 \setminus U_2)$ and $D = \emptyset$.

It remains to prove that Maker can indeed follow the proposed strategy and
that, by doing so, he wins the game $G(V_1, U_1; V_2, U_2; d)$.

It readily follows from its description that Maker can follow part (2) 
of the proposed strategy. Moreover, it is evident that Maker's 
graph is a matching at any point during the game. Hence, (even if he 
is forced to forfeit the game) he accomplishes goal (i). 
It follows that this game lasts at most $|V_1|$ moves. In particular,
Breaker can create at most $2|V_1|/d \leq \varepsilon |V_1|/4 \leq 
\min \{|U_1|/4, |U_2|/4\}$ dangerous vertices throughout the game. 
Since Maker decreases the size of $D$ whenever he follows part (1) of 
his strategy, we conclude that he follows this part at most 
$\min \{|U_1|/4, |U_2|/4\}$ times. Whenever Maker follows part (3) of the
proposed strategy, $D = \emptyset$ and there is no free edge $uv \in E$ 
such that $u \in V_1 \setminus U_1$, $v \in V_2 \setminus U_2$ and
$d_M(u) = d_M(v) = 0$. It follows by these two conditions and by Properties
(P2) and (P3) that $M_G$ covers at least $|V_1 \setminus U_1| - m - d$ 
of the vertices of $V_1 \setminus U_1$ and at least $|V_2 \setminus U_2|  
- m - d$ of the vertices of $V_2 \setminus U_2$. Since Maker matches
a vertex of $(V_1 \setminus U_1) \cup (V_2 \setminus U_2)$ whenever he 
follows part (3) of the proposed strategy, we conclude that he follows
this part at most $2(m + d) \leq \min \{|U_1|/4, |U_2|/4\}$ times. 
Since, moreover, Maker does not match any vertex of $U_1 \cup U_2$ when 
following part (2), we conclude that he matches at most $\min \{|U_1|/2, |U_2|/2\}$ 
vertices of $U_1 \cup U_2$ throughout the game. It follows that Maker accomplishes 
goal (iv). In particular, Maker can follow part (3) of the proposed 
strategy. Finally, since Maker accomplishes goal (iv), since Breaker creates 
at most $\min \{|U_1|/4, |U_2|/4\}$ dangerous vertices throughout the game,
since Maker plays according to part (1) of the proposed strategy whenever 
$D \neq \emptyset$ and since he decreases $|D|$ whenever he does so, we conclude
that Maker can follows part (1) of the proposed strategy. It now follows that
Maker accomplishes goals (ii) and (iii) as well and thus wins the game.    
\end{proof}

\subsection{A weak positive minimum degree game}
\label{subsec::weakPosMinDeg}

In this subsection we study the weak \emph{positive minimum degree} game
$(E(G), {\mathcal D}_G^1)$, played on the edge set of some given graph $G$.
The family of winning sets ${\mathcal D}_G^1$, consists of the edge sets
of all spanning subgraphs of $G$ with minimum degree at least 1. The following
result was proved in~\cite{HKSS}.  

\begin{theorem} [\cite{HKSS} Corollary 1.3] \label{th::fastPosMinDegKn}
For sufficiently large $n$, Maker has a strategy for winning the weak game
$(E(K_n), {\mathcal D}_{K_n}^1)$ within $\lfloor n/2 \rfloor + 1$ moves.
\end{theorem}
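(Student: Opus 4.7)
The plan is for Maker to build an almost-perfect matching that covers every vertex of $K_n$, with at most one additional cleanup edge when $n$ is odd. The lower bound $\lceil n/2 \rceil$ is immediate since each move covers at most two new vertices, and when $n$ is odd an extra move is genuinely needed to cover the leftover vertex.

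Maker will use a priority strategy with a slowly growing threshold $d = \lceil \log n \rceil$. Let $U$ denote the set of vertices currently uncovered by $M$, and call $u \in U$ \emph{dangerous} if $d_B(u) \geq d$; let $D$ be the set of dangerous vertices. On each move Maker applies the first of the following rules that is applicable: \textbf{(1)} if $D \neq \emptyset$, pick $u \in D$ and play a free edge $uv$ with $v \in U$; \textbf{(2)} play a free edge between two vertices of $U$; \textbf{(3)} if $|U| = 1$, play any free edge incident to the lone vertex of $U$. Rules (1) and (2) grow the matching by one, while rule (3) is used at most once, so Maker plays at most $\lfloor n/2 \rfloor + 1$ moves in total.

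Breaker therefore plays at most $\lfloor n/2 \rfloor + 1$ edges, so the sum of Breaker degrees is at most $n+2$, and at most $(n+2)/d = O(n/\log n)$ vertices are ever dangerous. Since rule (1) answers every alarm on the very next move, $|D| \leq 1$ at all times; and a dangerous $u$ with $d_M(u) = 0$ and $d_B(u) \leq d+1$ still has at least $|U| - d - 2 \geq n - 2|M| - d - 2$ free edges into $U$, so rule (1) remains feasible throughout. Rule (2) is feasible while $|U| \geq \sqrt{n}$, since in that regime $\binom{|U|}{2} > n/2 + 1 \geq e(B)$, forcing some pair inside $U$ to be free. Rule (3) is trivially feasible since $d_B(w) \leq n/2 + 1 < n - 1$.

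The heart of the matter, and the only genuinely delicate point, is the endgame, in which $|U|$ falls below $\sqrt{n}$ and Breaker could in principle have filled every edge inside $U$, forcing Maker into wasteful moves that cover only one new vertex each. To rule this out, the strategy has to be refined so that once $|U|$ drops to some threshold $T = \Theta(\sqrt{n})$ Maker switches to a preemptive mode that plays a free edge within $U$ whenever one exists, ahead of rule (1) if necessary. The key counting observation is that Breaker's total edge budget of $n/2 + 1$ cannot simultaneously support $\Omega(n/\log n)$ dangerous attacks and a near-clique on the final $O(\sqrt{n})$ vertices of $U$; for $n$ sufficiently large, this forces Maker to need at most one rule-(3) cleanup, keeping the total at $\lfloor n/2 \rfloor + 1$ and establishing the theorem.
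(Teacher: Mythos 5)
First, a point of orientation: the paper does not actually prove Theorem~\ref{th::fastPosMinDegKn}. It is quoted from~\cite{HKSS} (Corollary 1.3 there) and then used as the $m=1$ base case of the induction that proves the strengthening, Theorem~\ref{th::fastPosMinDegDense}. So there is no ``paper's own proof'' of this statement to compare against line by line. Nevertheless, the paper's proof of the $m=2$ case of Theorem~\ref{th::fastPosMinDegDense} is exactly the same kind of argument specialised slightly, and comparing your proposal against it exposes a genuine gap.

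The gap is precisely the endgame you flag as ``the only genuinely delicate point,'' and the fix you sketch does not work. Your threshold $d = \lceil \log n \rceil$ is far too coarse. Consider a Breaker who, early on, spends six moves claiming all $\binom{4}{2}$ edges inside some quadruple $\{a,b,c,d\}$ that your rule (2) (which is otherwise unspecified) happens never to touch. Each of $a,b,c,d$ then has Breaker degree at most $3 < d$, so none is ever declared dangerous and rule (1) is never triggered for them. If the matching ends with $U = \{a,b,c,d\}$, then rule (2) has no free edge to play, rule (1) has nothing to do, and rule (3) requires $|U|=1$; Maker either forfeits or spends four cleanup moves, overshooting $\lfloor n/2 \rfloor + 1$. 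Your ``key counting observation'' does not rule this out: Breaker needs only $O(1)$ edges to clog a final $U$ of size $O(1)$, well within his budget, and those edges never push any single vertex to degree $\log n$. (A separate, more minor slip: $|D| \le 1$ need not hold, since a single Breaker edge can push both of its endpoints to the threshold simultaneously.)

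What the argument actually needs is a constant-degree invariant on the Breaker graph inside $U$, maintained greedily rather than by a growing threshold. This is exactly what the paper does for $m=2$: Maker always matches the uncovered vertex of maximum degree in $H=(B\cup(K_n\setminus G))[V_0]$, and the induction shows $\Delta(H)\le 1$ after each of Maker's moves, so there is a free edge inside $V_0$ as long as $|V_0|\ge 3$. For $G=K_n$ the same idea works with $H = B[V_0]$ and even gives $\Delta(B[V_0])=0$ after every Maker move. With this invariant the endgame is trivial: at most two vertices remain uncovered, and at most one extra cleanup move is needed, giving $\lfloor n/2 \rfloor + 1$. Concretely, if you replace your rule (2) by ``claim a free edge $uv$ with $u,v\in U$ and $d_{B[U]}(u)$ maximum'' and prove the $\Delta(B[U])\le 1$ invariant, the separate danger set $D$ and the $\log n$ threshold become unnecessary, and the proof closes.
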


We strengthen Theorem~\ref{th::fastPosMinDegKn} by proving that its
assertion holds even when the board is not complete, though still very 
dense.

\begin{theorem} \label{th::fastPosMinDegDense}
For every positive integer $m$ there exists an integer $n_0 = n_0(m)$ 
such that, for every $n \geq n_0$ and for every graph $G=(V,E)$ on $n$
vertices with minimum degree at least $n-m$, Maker (as the first or
second player) has a strategy for winning the weak positive minimum degree
game $(E(G), {\mathcal D}_G^1)$, within at most $\lfloor n/2 \rfloor + 1$ 
moves.
\end{theorem}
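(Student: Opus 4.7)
\medskip

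\noindent\textbf{Proof plan.} The plan is to adapt the $K_n$ strategy from Theorem~\ref{th::fastPosMinDegKn} to the slightly non-complete board $G$, exploiting the fact that every vertex of $G$ has at most $m-1$ non-neighbors. Thus whenever Maker wishes to pair up some vertex with an unmatched partner, he has an abundance of eligible $G$-neighbors, and the missing edges of $G$ cause only a negligible distortion of the complete-graph argument.

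Fix a large constant $d = d(m)$. Maker maintains a matching in his graph and calls an unmatched vertex $v$ (i.e.\ one with $d_M(v) = 0$) \emph{dangerous} if $d_B(v) \geq d$. On each of his turns, Maker plays the first applicable rule: (P1) if some vertex is dangerous, pick one such vertex $v$ and claim a free edge $vu \in E(G)$ with $u$ unmatched; (P2) otherwise, claim any free edge $uv \in E(G)$ with $u$ and $v$ both unmatched; (P3) otherwise, if exactly one unmatched vertex $v$ remains, claim any free edge of $G$ incident to $v$. The game ends as soon as every vertex has positive Maker-degree.

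The feasibility analysis parallels that of Lemma~\ref{lem::SpecialMatching}. Writing $U$ for the set of unmatched vertices at a given moment, a direct count shows that each non-dangerous $v \in U$ has at least $(|U|-1) - (m-1) - d = |U| - m - d$ unmatched $G$-neighbors reachable from $v$ by a free edge (we subtract from $|U|-1$ the $\leq m-1$ non-neighbors of $v$ in $G$ and the fewer than $d$ vertices joined to $v$ by a Breaker-edge). Hence (P1) and (P2) are both executable with an unmatched partner as long as $|U| \geq m + d + 1$, and each such move covers two new vertices. The total Breaker-degree sum $\sum_v d_B(v) \leq 2(\lfloor n/2\rfloor + 1) \leq n+2$ yields at most $O(n/d)$ dangerous events throughout the game, each handled by a single (P1) move. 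Choosing $d$ large in terms of $m$, all of Maker's moves except possibly a single (P3) move at the end (used only for odd $n$) each contribute two newly-covered vertices, so the total is at most $\lceil n/2 \rceil \leq \lfloor n/2 \rfloor + 1$ moves.

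The main obstacle is the endgame, once $|U|$ has dropped below the threshold $m+d+1$, because then the combined effect of $G$-non-edges, Breaker-edges and any lingering dangerous vertex could in principle force Maker to ``waste'' a move by matching a dangerous $v$ to an already-matched partner. I would resolve this by a short case analysis: the extra move permitted by the budget $\lfloor n/2\rfloor + 1$ beyond the perfect-matching count $\lfloor n/2\rfloor$ absorbs any such waste when $n$ is even, while when $n$ is odd the analysis shows that Maker can maintain $|U| > m+d+1$ until only one vertex remains unmatched, at which point the single (P3) move supplies exactly the extra edge required to meet the tight bound $\lfloor n/2\rfloor + 1$.
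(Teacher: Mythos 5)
Your high-level plan (build a matching, fast-track vertices that accumulate too many Breaker-edges) is in the same spirit as the building blocks used elsewhere in this paper, but it takes a genuinely different route from the paper's own proof, which is by \emph{induction on $m$}: the paper works with the auxiliary graph $H := (B \cup (K_n \setminus G))[V_0]$, plays so as to keep $\Delta(H) \leq m$ and drive it down to $m-2$, and then appeals to the theorem for $m-1$ on the remaining isolated vertices. The reason the paper goes to this trouble is precisely the endgame issue that you flag and then leave unresolved, and I believe the gap there is real, not cosmetic.

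Concretely, the problem is that your notion of ``dangerous'' depends only on the \emph{global} Breaker-degree $d_B(v)$, whereas what decides whether $v$ can be matched late in the game is the combined count of non-$G$-neighbours of $v$ \emph{plus} Breaker-edges from $v$, both restricted to the current set of unmatched vertices. With a large constant $d$, Breaker can pick a fixed set $S$ of, say, $m+5$ vertices and, over the course of the game, claim every edge inside $S$ (only $\binom{m+5}{2}$ edges, a constant). Every vertex of $S$ then has $d_B(v) \leq m+4 < d$, so none of them is ever declared dangerous, and (P1) never fires on them. If Maker's arbitrary (P2) choices leave $S \subseteq U$ when $|U|$ falls to $|S|$, Maker must spend $|S|$ final moves matching each vertex of $S$ to an already-covered partner, overshooting the budget by roughly $m+3$ moves rather than the single spare move your accounting allows. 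Taking $d$ small does not rescue this either: the non-$G$-edges are invisible to your danger rule, and with small $d$ Breaker can flood the danger queue at the same rate Maker clears it. The ``short case analysis'' you promise is in fact the crux of the theorem, and as stated the rule set does not give Maker the information he needs to avoid the trap.

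What the paper does differently, and why it works: by defining danger via $d_H$ rather than $d_B$, Maker always attacks the vertex that is worst off \emph{in the combined hostile graph}, not merely the one with many Breaker-edges. The potential-function argument (the quantity $D(i) = \sum_{v \in V_0} d_H(v)$ drops by about $2m$ per step) shows the maximum $H$-degree can be forced down to $m-2$ while $|V_0|$ is still $\Omega(n/m)$, and then the induction hypothesis for $m-1$ finishes the game on the remaining isolated vertices with the correct move count. To repair your argument you would need, at minimum, to replace the danger criterion by something tracking $d_H(v, V_0)$ (or $d_B(v,V_0)$ plus the number of non-$G$-neighbours of $v$ in $V_0$), and to prove an invariant strong enough to guarantee that when $|U|$ is small, the unmatched vertices still admit an almost-perfect matching in $(G\setminus B)[U]$ -- which is essentially reconstructing the paper's induction.
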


\begin{proof} 
We prove Theorem~\ref{th::fastPosMinDegDense} by induction on $m$. At any
point during the game, let $V_0 := \{u \in V : d_M(u) = 0\}$
denote the set of vertices of $G$ which are isolated in Maker's
graph and let $H := (B \cup (K_n \setminus G))[V_0]$.

In the induction step we will need to assume that $m \geq 3$. Hence,
we first consider the cases $m=1$ and $m=2$ separately. If $m=1$,
then $G = K_n$ and thus the result follows immediately by
Theorem~\ref{th::fastPosMinDegKn}. Assume then that $m=2$ and assume
for convenience that $n$ is even (the proof for odd $n$ is similar
and in fact slightly simpler; we omit the straightforward details).
For every $1 \leq i \leq n/2 - 1$, in his $i$th move, Maker claims 
a free edge $uv$ such that $u,v \in V_0$ and $d_H(u) = \Delta(H)$.
In each of his next two moves, Maker claims a free edge $xy$ such 
that $x \in V_0$ and $y \in V$. 

It is evident that, by following this strategy, Maker wins the 
positive minimum degree game $(E(G), {\mathcal D}_G^1)$, within 
$\lfloor n/2 \rfloor + 1$ moves. It thus remains to prove that
he can indeed follow it. We prove that he can and that $\Delta(H) 
\leq 1$ holds immediately before Breaker's $i$th move for every 
$1 \leq i \leq n/2 - 1$, by induction on $i$. This holds for $i=1$
by assumption. Assume it holds for some $i$. Clearly $\Delta(H) 
\leq 2$ holds immediately after Breaker's $(i+1)$st move. Moreover, 
there are at most two vertices $w \in V_0$ such that
$d_H(w) = 2$ and if there are exactly two such vertices, then they
are connected by an edge of Breaker. In his $(i+1)$st move, Maker 
claims an edge which is incident with a vertex of maximum degree in 
$H$. It follows that $\Delta(H) \leq 1$ holds immediately after
this move. Moreover, since $|V_0| = n - 2i \geq 4$ and $\Delta(H) 
\leq 2$ hold prior to this move, Maker can indeed play his $(i+1)$st 
move according to the proposed strategy. It is clear that Maker can
play his $n/2$th and $(n/2 + 1)$st moves according to the proposed 
strategy.   

Assume then that $m \geq 3$ and that the assertion of the theorem
holds for $m-1$. We present a fast winning strategy for Maker. If at
any point during the game Maker is unable to follow the proposed
strategy, then he forfeits the game. The strategy is divided into
the following two stages.

\noindent \textbf{Stage I}: Maker builds a matching while trying to
decrease $\Delta(H)$. In every move, Maker claims a free edge $uv$
such that $u,v \in V_0$, $d_H(u) = \Delta(H)$ and $d_H(v) = \max
\{d_H(w) : w \in V_0, uw \in E(G \setminus B)\}$. The first stage is
over as soon as $\Delta(H) \leq m-2$ first holds.

\noindent \textbf{Stage II}: Maker builds a spanning subgraph of
$G[V_0]$ with positive minimum degree within $\lfloor |V_0|/2 
\rfloor + 1$ moves.

It is evident that, if Maker can follow the proposed strategy
without forfeiting the game, then he wins the positive minimum degree 
game on $G$ within $\lfloor n/2 \rfloor + 1$ moves. 
It thus suffices to prove that he can indeed do so. First we
prove that Maker can follow Stage I of his strategy, and moreover,
that this stage lasts at most $\frac{(m-1)n}{2m} + 2$ moves. It is
clear from the description of Maker's strategy that the following
property is maintained throughout Stage I.
\begin{description}
\item [$(*)$] $\Delta(H) \leq m$ holds after every move of Breaker.
Moreover, there are at most two vertices $u \in V_0$ such that
$d_H(u) = m$ and if there are exactly two such vertices, then they
are connected by an edge of Breaker.
\end{description}

For every non-negative integer $i$, immediately after Breaker's 
$(i+1)$st move, let $D(i) := \sum_{v \in V_0} d_H(v)$. Note that
$D(i) \geq 0$ for every $i$ and that $D(0) \leq (m-1) n + 2$ (before
the game starts the maximum degree of $H$ is at most $m-1$ and
Breaker claims one edge in his first move). For an arbitrary
non-negative integer $i$, let $uv$ be the edge claimed by Maker in
his $(i+1)$st move. At the time it was claimed, we had $d_H(u) =
\Delta(H) \geq m-1$. Assume first that $d_H(v) \geq 2$ was true as
well. It follows that $D(i+1) \leq D(i) - (m-1) - (m-1) - 2 - 2 + 2
= D(i) - 2m$ (we subtract $2m+2$ from $D(i)$ because of $u,v$ and
their neighbors, and then add $2$ because Breaker claims some edge
in his $(i+2)$nd move). It follows that there can be at most
$\frac{(m-1)n}{2m}$ such moves throughout the first stage. Assume
next that $d_H(v) \leq 1$; note that this entails $d_H(v) \leq m-2$
as $m \geq 3$ by assumption. It follows by Maker's strategy that $u$
is connected by an edge of $H$ to every vertex $x \in V_0$ such that
$d_H(x) \geq 2$. Claiming $uv$ decreases $d_H(w)$ by at least $1$
for every $w \in V_0 \cap N_H(u)$. It follows by Property $(*)$ that
after this move of Maker there is at most one vertex $z \in V_0$
such that $d_H(z) \geq m-1$. It is easy to see that, unless he
forfeits the game, Maker can ensure $\Delta(H) \leq m-2$ in his next
move. It follows that Stage I lasts at most $\frac{(m-1)n}{2m} + 2$
moves as claimed. In particular, we have $|V_0| \geq n/m - 4 > m+1
\geq \Delta(H) + 1$ and thus Maker can indeed follow Stage I of the
proposed strategy without forfeiting the game.

Next, we prove that Maker can follow Stage II of the proposed
strategy. Since the first stage lasts at most $\frac{(m-1)n}{2m} +
2$ moves, $|V_0| \geq n/m - 4 \geq n_0(m-1)$ holds at the beginning
of Stage II. Hence, it follows by the induction hypothesis that 
Maker can win the positive minimum degree game on $(G \setminus B)[V_0]$ 
within $\lfloor |V_0|/2 \rfloor + 1$ moves as claimed. 
\end{proof}

\begin{remark}
The requirement $n/m - 4 \geq n_0(m-1)$ appearing in the the proof of
Theorem~\ref{th::fastPosMinDegDense} shows that the assertion of this
theorem holds even for $m = c \log n/ \log \log n$, where $c > 0$ is
a sufficiently small constant.
\end{remark}

\subsection{A strong positive minimum degree game}
\label{subsec::strongPosMinDeg}

In this subsection we study the strong version of the 
\emph{positive minimum degree} game $(E(G), {\mathcal D}_G^1)$.
We prove the following result.

\begin{theorem} \label{th::strongPosMinDegDense}
For every positive integer $m$ there exists an integer $n_0 = n_0(m)$ 
such that, for every $n \geq n_0$ and for every graph $G=(V,E)$ on $n$
vertices with minimum degree at least $n-m$, Red has a strategy for 
winning the strong positive minimum degree game $(E(G), {\mathcal D}_G^1)$, 
within at most $\lfloor n/2 \rfloor + 1$ moves.
\end{theorem}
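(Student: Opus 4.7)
My plan is to have Red apply the Maker strategy from Theorem~\ref{th::fastPosMinDegDense}, exploiting the elementary fact that any graph on $n$ vertices with positive minimum degree has at least $\lceil n/2\rceil$ edges; so Blue cannot win before her $\lceil n/2\rceil$-th move. Applying Theorem~\ref{th::fastPosMinDegDense} to Red (as the first-player Maker, with Blue in the role of Breaker), Red completes positive minimum degree by his $(\lfloor n/2\rfloor + 1)$-th move. The strong game thus reduces to a race: does Red's finishing move arrive before Blue's first possible winning move?

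When $n$ is odd one has $\lceil n/2\rceil = \lfloor n/2\rfloor + 1$, so at the moment Red plays his $(\lfloor n/2\rfloor + 1)$-th move, Blue has played only $\lfloor n/2\rfloor < \lceil n/2\rceil$ moves and has not yet won, and Red wins the strong game. When $n$ is even the count is tight, since Blue's $n/2$-th move precedes Red's $(n/2+1)$-th move, so the Maker strategy alone is too slow and Red must instead finish on his own $n/2$-th move, i.e.\ build a perfect matching in exactly $n/2$ moves. For this I would have Red play a pure matching strategy maintaining the invariant that, writing $U$ for the set of vertices not yet covered by Red's matching and $F$ for the graph of currently-free $G$-edges, the induced subgraph $F[U]$ admits a perfect matching. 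Initially $U=V$ and the invariant holds since $\delta(G)\ge n-m$ and $n$ is large (for instance by Dirac's theorem). At each move Red picks an edge $uv$ of such a perfect matching of $F[U]$, choosing among the many available perfect matchings so that $F[U\setminus\{u,v\}]$ still admits one; this is straightforward while $|U|$ is large thanks to the density of $F[U]$, the bounded Blue interference, and Hall-type flexibility. On Red's $n/2$-th move the invariant with $|U|=2$ forces a free edge between the two remaining vertices, which Red claims to complete a perfect matching, winning before Blue's $n/2$-th move is played.

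The main obstacle is the endgame of the even-$n$ case: once $|U|$ has shrunk to a small constant, Dirac-style density estimates lose their bite and the invariant must be preserved by a finer analysis, making sure that the small residual free graph on $U$ does not end up in a state where every perfect matching of $U$ has been destroyed by the combination of Blue's edges and $G$'s non-edges. I expect this to require an explicit case analysis for small $|U|$, in the spirit of the $m\le 2$ base cases in the proof of Theorem~\ref{th::fastPosMinDegDense}; the hypothesis $\delta(G)\ge n-m$ together with the fact that Blue has played at most $n/2$ moves should give just enough slack for the invariant to survive, but carrying out this endgame rigorously is likely to be the most technical part of the proof.
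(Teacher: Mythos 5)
Your treatment of the odd-$n$ case is correct and matches the paper exactly. The even-$n$ case, however, contains a genuine gap, and it is not the technical endgame difficulty you flag at the end of your proposal --- the whole approach fails there for a reason that cannot be patched.

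The invariant you propose, that the free graph induced on the yet-unmatched vertices always admits a perfect matching, simply cannot be maintained against an adversarial Blue. The decisive moment is after Red's $(n/2-1)$-th move, when exactly two vertices $u,v$ remain unmatched. Blue then plays his $(n/2-1)$-th move; if $uv \in E(G)$ is still free, Blue claims it (and if it is not free, it is already unavailable to Red). In either case, Red cannot complete a perfect matching on his $n/2$-th move. This is not a corner case: it is the standard reason why Maker cannot win the positive-minimum-degree game on $K_n$ in $\lfloor n/2\rfloor$ moves (the tightness half of Theorem~\ref{th::fastPosMinDegKn}, and the paper remarks explicitly that Maker cannot win $(E(K_n),{\mathcal D}_n^k)$ in $\lfloor kn/2\rfloor$ moves). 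Your ``bounded Blue interference'' and Dirac-type density estimates address the wrong obstacle: they guarantee abundance of perfect matchings while $|U|$ is large, but they have no bearing on the single critical last step, which Breaker always wins.

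The paper therefore does something qualitatively different in the even case. Red does not try to guarantee finishing in $n/2$ moves. Instead, after his first move, Red looks at Blue's first edge $xy$, singles out an endpoint $x$ not in Red's first edge, and then keeps $x$ isolated in $R$ until the very last step, while first pairing off all $G$-non-neighbours of $x$ (the set $A$) so that $x$ is eventually adjacent in $G$ to whatever vertex $z$ remains. At the end Red tries $xz$: if it is free, Red finishes a perfect matching in $n/2$ moves and wins outright. If $xz$ is not free, it must belong to Blue, which forces $d_B(x)\ge 2$; since a graph on $n$ vertices with only $n/2$ edges and a vertex of degree $\ge 2$ cannot have positive minimum degree, Blue cannot finish in $n/2$ moves either, and Red's $(n/2+1)$-th move still wins the race. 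A parallel mechanism (Stage V in the paper) handles the case where $\Delta(B)\ge 2$ is detected earlier and similarly certifies that Blue has wasted a move. This either/or structure --- finish in $n/2$ moves, or certify that Blue has already fallen behind --- is the ingredient your proposal is missing, and it cannot be replaced by a matching invariant alone.
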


\begin{proof} Let ${\mathcal S}_G$ be Maker's strategy for the weak positive
minimum degree game $(E(G), {\mathcal D}_G^1)$ whose existence is guaranteed by
Theorem~\ref{th::fastPosMinDegDense}. If $n$ is odd, then Red simply follows
${\mathcal S}_G$. It follows by Theorem~\ref{th::fastPosMinDegDense} that Red
builds a spanning subgraph of $G$ with positive minimum degree in
$\lfloor n/2 \rfloor + 1$ moves. Since there is no such graph with
strictly less  edges, it follows that Red wins the game. Assume then
that $n$ is even.

We describe a strategy for Red for the strong positive minimum degree game
$(E(G), {\mathcal D}_G^1)$ and then prove it is a winning strategy. If, at any point
during the game, Red is unable to follow the proposed strategy, then
he forfeits the game. At any point during the game, let $V_0 := \{v
\in V : d_R(v) = 0\}$. The strategy is divided into the following
five stages.

\noindent \textbf{Stage I}: In his first move of this stage, Red
claims an arbitrary edge $e_1 = u_1 v_1$. Let $f = x y$ denote the
edge Blue has claimed in his first move; assume without loss of
generality that $x \notin e_1$. Let $A = \{z \in V_0 : xz \notin E\}
\cup \{y\}$. For every $i \geq 2$, immediately before his $i$th move
in this stage, Red checks whether $\Delta(B) \geq 2$, in which case
he skips to Stage V. Otherwise, Red checks whether $A \cap V_0 =
\emptyset$, in which case Stage I is over and Red proceeds to Stage
II. Otherwise, let $w \in A \cap V_0$ be an arbitrary vertex. In his
$i$th move in this stage, Red claims a free edge $w w'$ for some $w'
\in V_0$.

\noindent \textbf{Stage II}: Let $H = (G \setminus B)[V_0 \setminus
\{x\}]$ and let ${\mathcal S}_H$ be the winning strategy for Maker in
the weak positive minimum degree game, played on $E(H)$, which is
described in the proof of Theorem~\ref{th::fastPosMinDegDense}. 
Let $r$ denote the total number of moves Red has played in Stage I. For
every $r < i \leq 3n/8$, immediately before his $i$th move in this
stage, Red checks whether $\Delta(B) \geq 2$, in which case he skips
to Stage V. Otherwise, Red plays his $i$th move according to the
strategy ${\mathcal S}_H$. Once Stage II is over, Red proceeds to
Stage III.

\noindent \textbf{Stage III}: Let $H = (G \setminus B)[V_0 \setminus
\{x\}]$ and let ${\mathcal S}_H$ be the winning strategy for Maker in
the weak positive minimum degree game, played on $E(H)$, which is
described in the proof of Theorem~\ref{th::fastPosMinDegDense}. 
For every $3n/8 < i \leq n/2 - 1$, Red plays his $i$th move according to the
strategy ${\mathcal S}_H$. Once Stage III is over, Red proceeds to
Stage IV.

\noindent \textbf{Stage IV}: Let $z \in V_0 \setminus \{x\}$. If $xz
\in E$ is free, then Red claims it. Otherwise, in his next two
moves, Red claims free edges $x x'$ and $z z'$ for some $x', z' \in
V$. In either case, the game is over.

\noindent \textbf{Stage V}: Let $H = (G \setminus B)[V_0]$ and let
${\mathcal S}_H$ be the winning strategy for Maker in the weak
positive minimum degree game, played on $E(H)$, which is
described in the proof of Theorem~\ref{th::fastPosMinDegDense}. 
In this stage, Red follows ${\mathcal S}_H$ until the end of the game.

We first prove that Red can indeed follow the proposed strategy
without forfeiting the game. We consider each stage separately.

\noindent \textbf{Stage I}: Since $\delta(G) \geq n-m$, it follows
that $|A| \leq m$. Since, moreover, $n$ is sufficiently large with
respect to $m$, we conclude that Red can follow Stage I of the
proposed strategy.

\noindent \textbf{Stage II}: At the beginning of this stage we have
$|V_0 \setminus \{x\}| = n - 2r - 1 \geq 0.99 n $ and $\delta((G
\setminus B)[V_0 \setminus \{x\}]) \geq |V_0| - 1 - m - r \geq |V_0|
- 2m - 2$. Since $n$ is assumed to be sufficiently large with
respect to $m$, it follows by Theorem~\ref{th::fastPosMinDegDense} that the 
required strategy ${\mathcal S}_H$ exists and that Red can indeed 
follow it throughout this stage.

\noindent \textbf{Stage III}: At the beginning of this stage we have
$|V_0 \setminus \{x\}| \geq n/4 - 1$. Moreover, since Red did not
skip to Stage V, it follows that $\delta((G \setminus B)[V_0
\setminus \{x\}]) \geq |V_0| - m - 2$. Since $n$ is assumed to be
sufficiently large with respect to $m$, it follows by Theorem~\ref{th::fastPosMinDegDense} 
that the required strategy ${\mathcal S}_H$ exists and that Red can
indeed follow it throughout this stage.

\noindent \textbf{Stage IV}: If the edge $xz$ is still free, then
Red can clearly claim it. Otherwise, Red can claim a free edge
incident with $x$ and a free edge incident with $z$ since clearly
$\Delta(B) < n/2$.

\noindent \textbf{Stage V}: At the beginning of this stage we have
$|V_0| \geq n/4$. Moreover, since Red has just skipped to Stage V,
it follows that $\delta((G \setminus B)[V_0]) \geq |V_0| - m - 2$.
Since $n$ is assumed to be sufficiently large with respect to $m$, 
it follows by Theorem~\ref{th::fastPosMinDegDense} that the required strategy 
${\mathcal S}_H$ exists and that Red can indeed follow it throughout this
stage.

Next, we prove that if Red follows the proposed strategy, then he
wins the game within at most $n/2 + 1$ moves. If Red reaches Stage V
of the proposed strategy, then the game lasts at most $n/2 + 1$
moves. Since Red reaches Stage V only after Blue wastes a move, it
follows by Theorem~\ref{th::fastPosMinDegDense}that Red wins the game 
in this case. Assume then that Red never reaches Stage V of the proposed 
strategy. It is clear that, at the end of Stage I, Red's graph is a matching. 
Moreover, it follows by the proof of Theorem~\ref{th::fastPosMinDegDense} that
Red's graph is a matching at the end of Stages II and III as well. Moreover,
it is clear that $x \in V_0$ holds at this point. Hence, at the
beginning of Stage IV, we have $V_0 = \{x,z\}$ for some $z \in V$. 
Moreover, by Stage I of the proposed strategy we have $xz \in E$. 
If $xz$ is free, then Red claims it and thus builds a perfect matching 
in $n/2$ moves; hence, he wins the game in this case. Otherwise, the game 
lasts $n/2 + 1$ moves. However, in this case $xz$ was claimed by Blue and thus
$d_B(x) \geq 2$. We conclude that Red wins the game in this case as
well. This concludes the proof of the lemma.
\end{proof}

\section{The Maker-Breaker $k$-vertex-connectivity game}
\label{sec::kConMB}

In this section we prove Theorem~\ref{th::kConMakerBreaker}. 
In our proof we will use the following immediate corollary 
of Theorem 1.1 from~\cite{HS}.

\begin{corollary} \label{cor::HS}
Given a positive integer $n$, let ${\mathcal H}_n^+$ be the family
of all edge sets of Hamilton cycles with a chord of $K_n$. If $n$ 
is sufficiently large, then Maker (as the first or second player)
has a strategy for winning ${\mathcal H}_n^+$ in exactly $n+1$ moves.
\end{corollary}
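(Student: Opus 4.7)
The plan is to obtain this as a direct consequence of Theorem~1.1 of~\cite{HS}. As indicated by its use earlier in the introduction, that theorem supplies Maker with a strategy for building a Hamilton cycle of $K_n$ in at most $n+1$ moves, for all sufficiently large $n$. Our corollary will follow by extracting the chord ``for free'' from the single extra edge that this strategy must produce.

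First I would record the trivial lower bound: every graph in ${\mathcal H}_n^+$ has exactly $n+1$ edges, so Maker needs at least $n+1$ of his own moves to fully claim one. For the upper bound, I would have Maker play according to the strategy ${\mathcal S}$ promised by Theorem~1.1 of~\cite{HS} for exactly $n+1$ moves (if the Hamilton cycle is already complete earlier, Maker simply claims any free edge with his remaining move). At the end, Maker's graph $M$ has $n+1$ edges and contains a Hamilton cycle $C$. Since $|E(C)| = n$, the set $E(M) \setminus E(C)$ consists of a single edge $e$; as both endpoints of $e$ lie in $V(K_n) = V(C)$ but $e \notin E(C)$, the edge $e$ is automatically a chord of $C$. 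Therefore $E(M) \in {\mathcal H}_n^+$, and Maker has won in exactly $n+1$ moves.

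The heavy lifting is done entirely by~\cite{HS}, so no substantive obstacle is anticipated; the only observation beyond the quoted theorem is the trivial one that any edge of $K_n$ outside a Hamilton cycle $C$ of $K_n$ is by definition a chord of $C$. Because being first is never a disadvantage in a Maker--Breaker game, the same strategy works whether Maker moves first or second.
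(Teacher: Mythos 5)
Your argument is correct and matches the intended (unstated) derivation: the paper presents this as an ``immediate corollary of Theorem 1.1 from [HS],'' whose content is precisely the Hamilton-cycle-in-$n+1$-moves result you invoke. The small step you supply---that the unique edge of Maker's $(n+1)$-edge graph lying outside the Hamilton cycle $C$ must be a chord of $C$, since $V(C)=V(K_n)$---together with the observation that $n+1$ moves is forced by $|F|=n+1$ for every $F\in{\mathcal H}_n^+$, is exactly the reasoning the authors leave implicit.
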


\textbf{Proof of Theorem~\ref{th::kConMakerBreaker}:}
Assume that $k \geq 4$ (at the end of the proof we will indicate
which small changes have to be made to include the case $k=3$). 
We present a strategy for Maker and then prove it is a winning strategy. 
If at any point during the game Maker is unable to follow the proposed 
strategy, then he forfeits the game. Moreover, if after claiming $k n$ 
edges, Maker has not yet built a $k$-vertex-connected graph, then he 
forfeits the game (we will in fact prove that Maker can build such a graph 
much faster; however, the technical upper bound of $k n$ will suffice for 
the time being). The proposed strategy is divided into the
following four stages.

\textbf{Stage I}: Let $V(K_n) = V_1 \cup V_2 \cup \ldots \cup V_{k-1}$
be an arbitrary equipartition of $V(K_n)$ into $k-1$ pairwise
disjoint sets, that is, $||V_i|-|V_j|| \leq 1$ and $V_i \cap
V_j = \emptyset$ for every $1 \leq i \neq j \leq k-1$. For every $1 \leq
i \leq k-1$, let ${\mathcal S}_i$ be a winning strategy for Maker in
the game ${\mathcal H}_{|V_i|}^+$ played on $E(K_n[V_i])$ whose
existence is ensured by Corollary~\ref{cor::HS}. In this stage,
Maker's goal is to build a Hamilton cycle of $K_n[V_i]$ with a chord
for every $1 \leq i \leq k-1$ while limiting the degree of certain
vertices in Breaker's graph. If Maker is unable to accomplish both goals 
within $2n$ moves, then he forfeits the game. For every vertex $v \in V(K_n)$,
let $1 \leq i_v \leq k-1$ be the (unique) index such that $v\in
V_{i_v}$. Throughout this stage, Maker maintains a set $D \subseteq
V(K_n)\times [k-1]$ of \emph{dangerous} pairs. A pair $(v,i) \in
V(K_n)\times [k-1]$ is called \emph{dangerous} if $v \notin V_i$,
$d_B(v,V_i) \geq 0.9|V_{i}|$, $d_M(v,V_i) = 0$ and $d_M(v) < k$. 
Initially, $D = \emptyset$. For every positive integer $j$, let 
$e_j = u v$ denote the edge which has been claimed by Breaker in 
his $j$th move. Maker plays his $j$th move as follows.

\begin{enumerate}[(i)]
\item If $e_{j}\in E(V_i)$ for some $1\leq i \leq k-1$ and $M[V_i]$ is
not yet a Hamilton cycle (of $K_n[V_i]$) with a chord, then Maker
responds in this board according to the strategy ${\mathcal S}_i$.

\item Otherwise, if $D\neq \emptyset$, let $(z,i)\in D$ be
a dangerous pair such that $d_B(z,V_i)=\max
\{d_B(w,V_{\ell}):(w,\ell)\in D\}$. Maker claims a free edge $zw$
such that $w\in V_i$ and $d_M(w,V_{i_z})=0$. Subsequently, Maker
updates $D:=D\setminus \{(z,V_i),(w,V_{i_z})\}$.

\item Otherwise, if there exists $x\in \{u,v\}$ such that $M[V_{i_x}]$ 
is not yet a Hamilton cycle with a chord, then Maker plays as follows. 
Let $y\in \{u,v\}$ be such that $d_B(y,V(K_n) \setminus V_{i_y}) = 
\max \{d_B(v,V(K_n)\setminus V_{i_v}), d_B(u,V(K_n) \setminus V_{i_u})\}$
and let $z \in \{u,v\} \setminus \{y\}$. If $M[V_{i_y}]$ is not yet a 
Hamilton cycle with a chord, then Maker follows ${\mathcal S}_{i_y}$ on
the board $E(V_{i_y})$, otherwise he follows ${\mathcal S}_{i_z}$ on
$E(V_{i_z})$.   

\item Otherwise, Maker plays according to ${\mathcal S}_i$ in a
board $E(V_i)$ for some $1\leq i \leq k-1$ such that $M[V_i]$ is not
yet a Hamilton cycle with a chord.
\end{enumerate}
As soon as $M[V_i]$ is a Hamilton cycle with a chord
for every $1 \leq i \leq k-1$ and $D = \emptyset$, this stage is
over and Maker proceeds to Stage II.

\textbf{Stage II}: Let $C$ be the set of endpoints of the chords
of $\bigcup_{i=1}^{k-1} M[V_i]$. At any point during this stage, let  
$Y_C := \{v \in C: d_M(v) < k\}$, let $Y_D := \{v \in V(K_n) : d_M(v) < k 
\textrm{ and } d_B(v) \geq k^{10}\}$ and let $ Y := Y_C \cup Y_D$. 
For as long as $Y \neq \emptyset$, Maker picks an
arbitrary vertex $v \in Y$ and plays as follows. Let $t = d_M(v)$ and
let $\{i_1, \ldots ,i_{k-t}\} \subseteq [k-1] \setminus \{i_v\}$ be
$k-t$ distinct indices such that $d_M(v, V_{i_j}) = 0$ for every $1 \leq
j \leq k-t$. In his next $k-t$ moves, Maker claims $k-t$ free edges
$\{v v_{i_j} : 1 \leq j \leq k-t\}$ such that $v_{i_j} \in V_{i_j}$ and
$d_M(v_{i_j}, V_{i_v}) = 0$ for every $1 \leq j \leq k-t$.

As soon as $Y = \emptyset$, this stage is over and Maker proceeds to
Stage III.

\textbf{Stage III}: For every $1 \leq i \neq j \leq k-1$, let
$A_{ij} \subseteq V_i$ denote the set of vertices $v \in V_i$ such
that $d_M(v) < k$ and $d_M(v,V_j) = 0$. Moreover, for every $1 \leq i \neq
j \leq k-1$, let $B_{ij} \subseteq A_{ij}$ be sets which satisfy all
of the following properties:
\begin{enumerate}[(P1)]
\item $B_{ij} \cap B_{i\ell} = \emptyset$ for every $1 \leq i \leq k-1$
and for every $1\leq j\neq \ell\leq k-1$.

\item $n/k^6 \leq |B_{ij}| \leq 2n/k^6$ for every $1 \leq i \neq j \leq
k-1$.

\item $dist_{M[V_i]}(u,v)\geq 2$ for every $1\leq i \leq k-1$ and for every two distinct
vertices $u,v\in \bigcup _{j\in [k-1]\setminus \{i\}} B_{ij}$.
\end{enumerate}

For every $1 \leq  i < j \leq k-1$ let $G_{ij} = (A_{ij} \cup A_{ji},
E_{K_n \setminus B}(A_{ij},A_{ji}))$ and let ${\mathcal S}_{ij}$ be the
winning strategy for Maker in the game
$G_{ij}(A_{ij}, B_{ij}; A_{ji}, B_{ji}; 2 k^{10})$ which is described in
the proof of Lemma~\ref{lem::SpecialMatching}.

At any point during this stage, for every $1 \leq i < j \leq k-1$,
Maker maintains a matching $M_{ij}$ of the board $E(G_{ij})$ and a
set $D\subseteq V(K_n)$ of \emph{dangerous} vertices. A vertex $v\in
V(K_n)$ is called dangerous if $v\in B_{ij}$ for some $1 \leq i \neq j
\leq k-1$ (without loss of generality assume $i<j$) and, moreover,
$v$ satisfies all of the following properties:
\begin{enumerate}[(1)]
\item $v$ is not matched in $M_{ij}$.
\item $M_{ij}$ covers $(A_{ij} \setminus B_{ij}) \cup (A_{ji} \setminus B_{ji})$.
\item $d_B(v) \geq k^{10}$.
\end{enumerate}

Initially, $D = M_{ij} = \emptyset$ for every $1 \leq i < j \leq k-1$.

Let $r$ denote the number of moves Maker has played throughout Stages
I and II. For every $s > r$, let $e_s$ denote the edge that has been
claimed by Breaker in his $s$th move. Maker plays his $s$th move as
follows:

\begin{enumerate} [(i)]

\item If $e_s \in E(G_{ij})$ for some $1 \leq i < j \leq k-1$ and $M_{ij}$
does not yet cover $\left(A_{ij}\setminus B_{ij}\right)\cup
\left(A_{ji}\setminus B_{ji}\right)$, then Maker responds in the board 
$E(A_{ij},A_{ji})$ according to the strategy ${\mathcal S}_{ij}$.

\item Otherwise, if $D\neq \emptyset$, then Maker claims a free edge $uv$ 
between two sets $B_{ij}$ and $B_{ji}$ such that the following properties hold.
\begin{enumerate} [(a)]
\item $u \in D$.
\item $d_B(u) = \max \{d_B(w) : w \in D\}$.
\item $M_{ij}$ covers $(A_{ij}\setminus B_{ij})\cup
(A_{ji}\setminus B_{ji})$.
\end{enumerate}
Maker updates $D:=D\setminus \{u,v\}$.

\item Otherwise, Maker picks arbitrarily $1\leq i < j \leq k-1$
such that $M_{ij}$ does not yet cover $(A_{ij}\setminus B_{ij})\cup
(A_{ji}\setminus B_{ji})$ and plays in the board $E(A_{ij},A_{ji})$
according to the strategy ${\mathcal S}_{ij}$.
\end{enumerate}

As soon as $M_{ij}$ covers $\left(A_{ij}\setminus B_{ij}\right)\cup
\left(A_{ji}\setminus B_{ji}\right)$ for every $1\leq i < j \leq
k-1$ and $D=\emptyset$, this stage is over and Maker proceeds to
Stage IV.

\textbf{Stage IV}: Let $U = \{v \in V(K_n) : d_M(v) = k-1\}$ and let 
$H := (K_n \setminus B)[U]$. Let ${\mathcal S}_H$ be a strategy
for Maker for winning the positive minimum degree game $(E(H), {\mathcal D}_H^1)$
within $\lfloor |U|/2 \rfloor + 1$ moves. In this stage
Maker follows ${\mathcal S}_H$ until $\delta(M) \geq k$ first occurs; 
at this point the game is over.

It is evident that if Maker can follow the proposed strategy without
forfeiting the game, then, by the end of the game, he builds a 
graph $M \in {\mathcal G}_k$, which is $k$-vertex-connected by
Proposition~\ref{prop::kconnected}. It thus suffices to prove that 
Maker can indeed follow the proposed strategy without forfeiting the 
game and that, by doing so, he builds an element of ${\mathcal G}_k$ 
within $\lfloor k n/2 \rfloor + 1$ moves.

Our first goal is to prove that Maker can indeed follow the proposed
strategy without forfeiting the game. We consider each stage
separately.

\textbf{Stage I}: Since $|V_i| \geq \lfloor n/(k-1) \rfloor$ for every
$1 \leq i \leq k-1$ and since $n$ is sufficiently large with respect to $k$, 
it follows by Corollary~\ref{cor::HS} that Maker can follow part (i) of
the proposed strategy for this stage.

Recall that, by definition, this stage lasts at most $2n$ moves and
that $d_B(v, V_i) \geq 0.9 |V_i| \geq 0.9n/k$ holds for every dangerous pair 
$(v,i) \in D$. Therefore, throughout Stage I, Breaker can create at most
$4n/ \left(\frac{0.9n}{k} \right) \leq 5k$ such pairs. We claim that 
at any point during Stage I, $d_B(v, V_i) \leq 0.95 |V_i|$ holds for 
every vertex $v \in V(K_n)$ and every $i \in [k-1] \setminus \{i_v\}$.
This is immediate by the definition of $D$ for every pair $(v,i) \in 
(V(K_n) \times [k-1]) \setminus D$. Consider a point during this stage where 
$D \neq \emptyset$ (if this never happens, then there is nothing left to prove). 
If Breaker plays in $\bigcup_{i=1}^{k-1} E(V_i)$, then he does not
increase $d_B(v, V_i)$ for any pair $(v,i) \in D$. Otherwise, Maker follows 
part (ii) of the proposed strategy for this stage and thus decreases the 
size of $D$. It follows that, throughout Stage I, Maker follows part (ii)
of the proposed strategy at most $5k$ times. Since $n$ is
sufficiently large with respect to $k$, it follows that, throughout 
Stage I, $d_B(v, V_i) \leq 0.9 |V_i| + 5k \leq 0.95 |V_i|$ holds for 
every $v \in V(K_n)$ and every $i \in [k-1] \setminus \{i_v\}$ as claimed.  
Since Maker follows part (ii) of the proposed strategy at most $5k$ times
and since he only claims edges of $\bigcup_{i=1}^{k-1} E(V_i)$ when following
parts (i), (iii) or (iv) of the strategy, it follows that, throughout 
Stage I, $|\{u \in V_i : d_M(u, V_j) = 0\}| \geq 0.99 |V_i|$ holds for every 
$1 \leq i \neq j \leq k-1$. Hence, Maker can follow part (ii) of the proposed 
strategy for this stage without forfeiting the game.     

Finally, it readily follows from Corollary~\ref{cor::HS} that Maker can follow 
parts (iii) and (iv) of the proposed strategy for this stage.

It thus suffices to prove that Maker can achieve his goals for this
stage within at most $2n$ moves. This readily follows from the following
three simple observations.

\begin{enumerate} [(a)]
\item According to Corollary~\ref{cor::HS}, for every $1 \leq i \leq k-1$,
Maker can build a Hamilton cycle of $K_n[V_i]$ with a chord in
$|V_i| + 1$ moves.

\item Whenever Maker follows parts (i), (iii) or (iv) of the proposed
strategy for this stage, he plays according to ${\mathcal S}_i$
for some $1 \leq i \leq k-1$.

\item As previously noted, Maker follows part (ii) 
of the proposed strategy at most $5k$ times.
\end{enumerate}

It follows that Stage I lasts at most
$\sum_{i=1}^{k-1} \left(|V_i|+1 \right) + 5k = n + (k-1) + 5k < 2n$ moves.

We conclude that Maker can follow the proposed strategy for this stage,
including the time limits it sets, without forfeiting the game.

\textbf{Stage II}: Since the entire game lasts at most $k n$ moves, 
it follows that $|\{u \in V(K_n) : d_B(u) \geq k^{10}\}| \leq 
2 k n/ k^{10}$ holds at any point during the game. Hence, 
$|Y| \leq 2(k-1) + 2 n/ k^9 \leq 3 n/ k^9$ holds at any point during
this stage. Since $D = \emptyset$ at the end of Stage I and since 
Maker spends at most $k$ moves on every vertex of $Y$, it follows 
that, at any point during this stage, $d_B(v,V_i) \leq 0.9 |V_i| + 
3 n/ k^8 \leq 0.95 |V_i|$ holds for every vertex $v \in Y$ and for 
every $i \in [k-1] \setminus \{i_v\}$. Since, as noted above, 
$|\{u \in V_i : d_M(u, V_j) = 0\}| \geq 0.99 |V_i|$ holds for every 
$1 \leq i \neq j \leq k-1$ at the end of Stage I, it follows that 
$|\{u \in V_i : d_M(u, V_j) = 0\}| \geq 0.98 |V_i|$ holds for every 
$1 \leq i \neq j \leq k-1$ throughout Stage II. We conclude that Maker 
can follow the proposed strategy for this stage without forfeiting the 
game.

\textbf{Stage III}: For every $1 \leq i \leq k-1$, let $A_i := \{u \in
V_i : d_M(u) = 2\}$. Since Maker follows Stages I and II of the proposed
strategy, we conclude that $|A_i| \geq \lfloor n/(k-1) \rfloor - (k+1)
(5k + 2(k-1) + 2 n/ k^9) \geq 0.9 n/k$ holds for every such $i$.
Moreover, since Stage II lasts at most $k |Y| \leq n/k^7$ moves, it follows
that $||A_{ij}| - |A_{ji}|| \leq n/k^7$ holds for every $1 \leq i < j \leq k-1$.

For every $1 \leq i \leq k-1$, let $B_i \subseteq A_i$ be a set which 
satisfies $|B_i| \geq \lfloor |A_i|/2 \rfloor \geq |A_i|/3$
and $dist_{M[V_i]}(u,v) \geq 2$ for every $u,v \in B_i$ (one example
of such a set is obtained by enumerating the elements of $A_i$ according 
to their order of appearance on the Hamilton cycle of $K_n[V_i]$ and
taking either all even indexed vertices or all odd indexed vertices). 
Let $B_i = B_i^{(1)} \cup \ldots \cup B_i^{(i-1)} \cup B_i^{(i+1)} \cup 
\ldots \cup B_i^{(k-1)}$ be an equipartition of $B_i$. For every $1 \leq i <
j \leq k-1$ let $B_{ij} \subseteq B_i^{(j)}$ and $B_{ji} \subseteq
B_j^{(i)}$ be chosen such that Property (P2) in the description of
the proposed strategy for this stage holds. Note that Properties (P1)
and (P3) hold as well by the construction of the $B_i$'s and the 
$B_i^{(j)}$'s.
 
Since, as noted above, $||A_{ij}| - |A_{ji}|| \leq n/k^7$ holds for 
every $1 \leq i < j \leq k-1$, since $d_B(u) < k^{10}$ holds for 
every $u \in A_i$ by Stage II of the proposed strategy and since
$n$ is sufficiently large with respect to $k$, it follows by
Lemma~\ref{lem::SpecialMatching} (with $\varepsilon = k^{-4}$)
that Maker can follow parts (i) and (iii) of the proposed strategy 
for this stage.

Moreover, since $d_B(v) \geq k^{10}$ holds for every dangerous
vertex and since the entire game lasts at most $k n$ moves, it follows
that Breaker can create at most $2 k n /k^{10} \leq n/k^{8}$ such
vertices. Since Maker spends exactly one move to treat a dangerous
vertex and since $|B_{ij}| \geq n/k^6$ holds by construction for every 
$1 \leq i \neq j \leq k-1$, it follows that Maker can indeed 
follow part (ii) of the proposed strategy for this stage.

\textbf{Stage IV}: Whenever Maker follows part (ii) of the
proposed strategy for this stage, he increases the degrees
of two vertices by 1 each and decreases the size of $D$. Since 
the entire game lasts at most $k n$ moves and since $d_B(v)
\geq k^{10}$ holds for every $v \in D$, it follows that Maker
follows part (ii) of the strategy at most $2 n/k^9$ times. 
It follows by Lemma~\ref{lem::SpecialMatching} and by Property 
(P2) that  
$$
|U| \geq \sum_{1 \leq i \neq j \leq k-1} |B_{ij}|/2 - 4 n/k^9 \geq 
\binom{k-1}{2} n/(2k^6) - 4 n/k^9 \geq n/(3k^6) \,.
$$ 
Since $n$ is sufficiently large with respect to $k$, it thus follows 
by Theorem~\ref{th::fastPosMinDegDense} that Maker can follow the strategy 
${\mathcal S}_H$ throughout this stage without forfeiting the game. 

It remains to prove that, by following the proposed strategy, Maker wins 
the game within $\lfloor k n/2 \rfloor + 1$ moves. It follows by
Theorem~\ref{th::fastPosMinDegDense} that Stage IV lasts at most
$\lfloor |U|/2 \rfloor + 1$ moves. It thus suffices to prove that 
$\delta(M) \leq k$ holds throughout Stages I, II and III. This follows
quite easily from the description of Maker's strategy. There is one 
exception though. If $d_M(u, V_i) > 0$ for every $i \in [k-1] \setminus 
\{i_u\}$ and only then $u$ becomes an endpoint of a chord, then we have 
$d_M(u) = k+1$. In order to overcome this problem, we include part (iii)
of the strategy for Stage I. Recall that Maker follows part (ii) of the 
proposed strategy for this stage at most $5k$ times and that $||V_i| - |V_j||
\leq 1$ holds for every $1 \leq i < j \leq k-1$. It thus follows by 
part (iii) of the proposed strategy that if $d_B(u, V_i) \geq 0.9 |V_i|$ and $d_B(u, V_j) 
\geq 0.9 |V_j|$ hold for two distinct indices $i, j \in [k-1] \setminus 
\{i_u\}$, then $M[V_{i_u}]$ is already a Hamilton cycle with a chord;
in particular we know whether $u$ is an endpoint of this chord or not.
Since $k \geq 4$, we can afford to wait until a vertex appears in two
dangerous pairs. For $k=3$ we have no choice but to ensure that if a 
vertex $u$ satisfies $d_M(u, V_i)$ for $i \neq i_u$, then it will not 
become an endpoint of the chord of $M[V_{i_u}]$. In order to ensure 
this, one has to slightly alter Maker's strategy for the game 
${\mathcal H}_{|V_{i_u}|}^+$. This can be done by adjusting the strategy
given in the proof of Theorem 1.1 in~\cite{HS} or the strategy
given in the proof of Theorem 1.1 in~\cite{HKSS} (the latter is easier).
Note that this solution works for every $k \geq 3$. However, where possible,
we preferred a solution which uses Maker's strategy for the Hamilton cycle 
with a chord game as a black box.

This concludes the proof of the theorem.
{\hfill $\Box$ \medskip\\}

\section{The strong $k$-vertex-connectivity game}
\label{sec::strongCon}

\textbf{Proof of Theorem~\ref{th::kConStrongGame}:}
Let $k \geq 3$ be an integer and assume first that $k n$ is odd. Red
simply follows Maker's strategy for the weak $k$-vertex-connectivity
game $(E(K_n), {\mathcal C}_n^k)$ whose existence is guaranteed by 
Theorem~\ref{th::kConMakerBreaker}. It follows by Theorem~\ref{th::kConMakerBreaker} 
that he builds a $k$-vertex-connected graph in $\lfloor k n/2 \rfloor + 1$ moves.
Since, for odd $k n$, there is no graph $G$ on $n$ vertices such that
$\delta(G) \geq k$ and $e(G) \leq \lfloor k n/2 \rfloor$, it follows that Red 
wins the strong $k$-vertex-connectivity game $(E(K_n), {\mathcal C}_n^k)$.

Assume then that $k n$ is even. First, we present a strategy for Red and 
then prove it is a winning strategy. If at any point during the game Red is 
unable to follow the proposed strategy, then he forfeits the game. 
The proposed strategy is divided into the following two stages.

\textbf{Stage I}: Let ${\mathcal S}_M$ be the winning strategy for
Maker in the weak game $(E(K_n), {\mathcal C}_n^k)$ which is described 
in the proof of Theorem~\ref{th::kConMakerBreaker}. In this stage, Red 
follows Stages I, II and III of the strategy ${\mathcal S}_M$. As soon 
as Red first reaches Stage IV of ${\mathcal S}_M$, this stage is over 
and Red proceeds to Stage II.

\textbf{Stage II}: Let $U_0 := \{v \in V(K_n) : d_R(v) = k-1\}$ and 
let $G=(K_n\setminus B)[U_0]$. Let ${\mathcal S}_G$ be the winning
strategy for Red in the strong positive minimum degree game $(E(G), 
{\mathcal D}_G^1)$ which is described in the proof of Theorem~\ref{th::strongPosMinDegDense}. 
We distinguish between the following three cases.
\begin{enumerate}[(1)]
\item If $\Delta(B) > k$, then Red continues playing according to the
strategy ${\mathcal S}_M$ until the end of the game. That is, he follows
Stage IV of ${\mathcal S}_M$ until his graph first becomes $k$-vertex-connected.

\item Otherwise, if $d_B(v) \leq k-1$ for every $v \in U_0$, then Red
plays the strong positive minimum degree game $(E(G), {\mathcal D}_G^1)$ 
according to the strategy ${\mathcal S}_G$ until his graph becomes 
$k$-vertex-connected.

\item Otherwise, let $x \in U_0$ be a vertex such that $d_B(x) = k$. Let
$H = G \setminus \{x\}$ and let ${\mathcal S}_H$ be the winning strategy
for Red in the strong positive minimum degree game $(E(H), {\mathcal D}_H^1)$ 
which is described in the proof of Theorem~\ref{th::strongPosMinDegDense}. 
Let $r$ denote the total number of moves Red has played so far. 
This case is further divided into the following four substages.
\begin{enumerate}[(i)]
\item For every $r < i \leq k n/2 - |U_0|/3$, immediately before 
his $i$th move in this stage, Red checks whether $\Delta(B) > k$, 
in which case he skips to Substage (iv). Otherwise, Red plays his 
$i$th move according to the strategy ${\mathcal S}_H$. As soon as
this substage is over Red proceeds to Substage (ii).

\item For every $k n/2 - |U_0|/3 < i \leq k n/2 - 1$, Red plays 
his $i$th move according to the strategy ${\mathcal S}_H$. 
When this substage is over Red proceeds to Substage (iii).

\item Let $z \in U_0 \setminus \{x\}$ be a vertex of degree $k-1$
in Red's graph. If the edge $xz \in E(K_n)$ is free, then Red 
claims it. Otherwise, in his next two moves, Red
claims free edges $x x'$ and $z z'$ for some $x', z' \in V(K_n)$. In
both cases the game is over.

\item Let $U := \{v \in V(K_n) : d_R(v) = k-1\}$ and let 
$G' = (K_n \setminus B)[U]$. Let ${\mathcal S}_{G'}$ be the winning strategy
for Red in the strong positive minimum degree game $(E(G'), {\mathcal D}_{G'}^1)$ 
which is described in the proof of Theorem~\ref{th::strongPosMinDegDense}. 
In this substage, Red follows ${\mathcal S}_{G'}$ until the end, that is, 
until his graph first becomes $k$-vertex-connected.
\end{enumerate}
\end{enumerate}

It is evident that if Red can follow the proposed strategy without
forfeiting the game, then, by the end of the game, he builds a 
graph $R \in {\mathcal G}_k$, which is $k$-vertex-connected by
Proposition~\ref{prop::kconnected}. It thus suffices to prove that 
Red can indeed follow the proposed strategy without forfeiting the 
game, that he builds an element of ${\mathcal G}_k$ within 
$\lfloor k n/2 \rfloor + 1$ moves and that he does so before 
$\delta(B) \geq k$ first occurs.

Our first goal is to prove that Red can indeed follow the proposed
strategy without forfeiting the game. We consider each stage
separately.

\textbf{Stage I}: Since $n$ is sufficiently large with respect to
$k$, it follows by Theorem~\ref{th::kConMakerBreaker} that Red can
follow Stage I of the proposed strategy.

\textbf{Stage II}: We consider each of the three cases separately.
\begin{enumerate}[(1)]
\item Since Red has played all of his moves in Stage I according 
to the strategy ${\mathcal S}_M$, it follows by Theorem~\ref{th::kConMakerBreaker} 
that he can continue doing so until the end of the game.

\item Since Red has played all of his moves in Stage I according 
to the strategy ${\mathcal S}_M$, it follows by the proof of 
Theorem~\ref{th::kConMakerBreaker} that $|U_0| = \Omega(n)$ holds
at the beginning of Stage II. Since we are not in Case (1), it
follows that $\delta(G) \geq |U_0| - k$. Since, moreover, $n$
is sufficiently large with respect to $k$, it follows by 
Theorem~\ref{th::strongPosMinDegDense} that Red can indeed
follow the proposed strategy for this case without forfeiting the
game.

\item As previously noted, $|U_0| = \Omega(n)$ holds at the beginning 
of Stage II. Since we are not in Case (1), it follows that $\delta(H) 
\geq |U_0| - 1 - k$. Since, moreover, $n$ is sufficiently large with 
respect to $k$, it follows by Theorem~\ref{th::strongPosMinDegDense} 
that Red can follow Substage (i) of the proposed strategy for this case. 
Since $\Delta(B) \leq k$ holds at the beginning of Substage (ii) 
(otherwise Red would have skipped to Substage (iv)), it follows by an 
analogous argument that Red can follow Substage (ii) of the proposed 
strategy for this case as well. It follows by Substages (i) and (ii)
of the proposed strategy that, at the beginning of Substage (iii),
there are exactly two vertices of degree $k-1$ in Red's graph, one
of which is $x$. Denote the other one by $z$. Since $\Delta(B) \leq k$ 
holds at the beginning of Substage (ii) and since this entire substage 
clearly lasts at most $|U_0|/3$ moves, it follows that $d_B(x) \leq 
k + |U_0|/3 < n/2$ and $d_B(z) \leq k + |U_0|/3 < n/2$ hold at the beginning of
Substage (iii). Hence, Red can follow Substage (iii) of the proposed 
strategy for this case. Finally, since $\Delta(B) \leq k+1$ and $|U| 
= \Omega(n)$ clearly hold at the beginning of Substage (iv) and since
$n$ is sufficiently large with respect to $k$, it follows by 
Theorem~\ref{th::strongPosMinDegDense} that Red can follow Substage 
(iv) of the proposed strategy for this case.      
\end{enumerate}

It is evident from the description of the proposed strategy that
the game lasts at most $k n/2 + 1$ moves. Hence, in order to complete 
the prove of the theorem, it suffices to show that, if the game
lasts exactly $k n/2 + 1$ moves, then $\Delta(B) > k$. This clearly
holds if the game ends in Case (1) or in Substage (iv) of Case (3).
If the game ends in Case (2), then this follows by 
Theorem~\ref{th::strongPosMinDegDense}. Finally, if the game lasts 
exactly $k n/2 + 1$ moves and ends in Substage (iii) of Case (3),
then $d_B(x) \geq k+1$ must hold.
 
This concludes the proof of the theorem.
{\hfill $\Box$\medskip\\}

\section{Concluding remarks and open problems}
\label{sec::openprob}

\begin{description}
\item [A more natural fastest possible strategy for the minimum-degree-$k$ game.] As noted
in Corollary~\ref{cor::MinDegk} (respectively Corollary~\ref{cor::StrongMinDegk}), Maker 
(respectively Red) can win the weak (respectively strong) minimum-degree-$k$ game 
$(E(K_n), {\mathcal D}_n^k)$ within $\lfloor k n/2 \rfloor + 1$ moves by following his 
strategy for the weak (respectively strong) $k$-vertex-connectivity game $(E(K_n), {\mathcal C}_n^k)$.
While useful, this is not a very natural way to play this game. We have found
a much more natural strategy for Maker (respectively Red) to win the weak (respectively strong) game 
$(E(K_n), {\mathcal D}_n^k)$ within $\lfloor k n/2 \rfloor + 1$ moves. It consists
of two main stages. In the first stage, Maker (respectively Red) builds a graph
with minimum degree $k-1$ and maximum degree $k$. This is done almost arbitrarily
except that Maker (respectively Red) ensures that, if a vertex has degree $k-1$
in his graph, then its degree in Breaker's (respectively Blue's) graph will not
be too large. In the second stage, he plays the weak (respectively strong) positive 
minimum degree game $(E(K_n), {\mathcal D}_n^1)$ on the graph induced by the
vertices of degree $k-1$ in his graph. We omit the details.      

\item [Explicit winning strategies for other strong games.] Following the observation
made in~\cite{FH} that fast winning strategies for Maker in a weak game 
have the potential of being upgraded to winning strategies for Red in the
corresponding strong game, we have devised a winning strategy for Red in the 
strong $k$-vertex-connectivity game. It is plausible that one could devise
a winning strategy for other strong games, where a fast strategy 
is known for the corresponding weak game. One natural candidate is the \emph{specific 
spanning tree} game. This game is played on the edge set of $K_n$ for some sufficiently
large integer $n$. Given a tree $T$ on $n$ vertices, the family of winning sets ${\mathcal T}_n$
consists of all copies of $T$ in $K_n$. It was proved in~\cite{FHK} that Maker
has a strategy to win the weak game $(E(K_n), {\mathcal T}_n)$ within $n + o(n)$ moves
provided that $\Delta(T)$ is not too large.   

On the other hand, there are weak games for which Maker has a winning strategy 
and yet Breaker can refrain from losing quickly. Consider for example the 
\emph{Clique} game $RG(n,q)$. The board of this game is the edge set of $K_n$ and
the family of winning sets consists of all copies of $K_q$ in $K_n$. It is 
easy to see that for every positive integer $q$ there exists an integer $n_0$
such that Maker (respectively Red) has a strategy to win the weak (respectively
strong) game $RG(n,q)$ for every $n \geq n_0$.
However, it was proved in~\cite{BeckFast} that Breaker can refrain from 
losing this game within $2^{q/2}$ moves. The current best upper bound on
the number of moves needed for Maker in order to win $RG(n,q)$ is $2^{2q/3} \cdot f(q)$,
where $f(q)$ is some polynomial in $q$ (see~\cite{Gebauer}). Note that this
upper bound does not depend on the size of the board, in particular, it holds
for an infinite board as well. Given that an exponential lower bound on the
number of moves is known, it would be very 
interesting to find an explicit winning strategy for Red in the strong game $RG(n,q)$
for every positive integer $q$ and sufficiently large $n$.
Moreover, it would be interesting to determine whether Red can win this game
on an infinite board.   
\end{description}

\end{document}